\newtheorem{thm}{Theorem}[section]
\newtheorem{lem}[thm]{Lemma}
\newtheorem{pro}[thm]{Proposition}
\theoremstyle{definition}
\newtheorem{defi}[thm]{Definition}
\theoremstyle{remark}
\newtheorem{exa}[thm]{Example}
\newtheorem{rema}[thm]{Remark}
\numberwithin{equation}{section}
\def\ep{\varepsilon}
\def\Q{\mathbb{Q}}
\def\Z{\mathbb{Z}}
\def\F{\mathbb{F}}
\def\A{\mathbb{A}}
\def\cuA{\mathcal{A}}
\def\cuB{\mathcal{B}}
\def\cuC{\mathcal{C}}
\def\cuQ{\mathcal{Q}}
\def\v{\mathrm{v}}
\newcommand{\mmid}{\mathrel{\|}}
\renewcommand{\H}{\mathrm{H}}
\newcommand{\Gm}{\mathbb{G}_m}
\DeclareMathOperator{\rnk}{rk}
\DeclareMathOperator{\Br}{Br}
\DeclareMathOperator{\Pic}{Pic}
\DeclareMathOperator{\inv}{inv}
\DeclareMathOperator{\id}{id}
\DeclareMathOperator{\rk}{rk}
\renewcommand{\Im}{\mathrm{Im}}
\begin{document}

\title{Quartic del Pezzo surfaces with a Brauer group of order $4$}

\author{Julian Lyczak}

\address{IST Austria\\
	Am Campus 1\\
	3400 Klosterneuburg\\
	Austria}
\email{jlyczak@ist.ac.at}

\author{Roman Sarapin}

\address{V.N.Karazin Kharkiv National University\\
	Svobody Square 4\\
	Kharkiv\\
	61022\\
	Ukraine}
\email{romansarapin13@gmail.com}

\begin{abstract}
We study arithmetic properties of del Pezzo surfaces of degree $4$ for which the Brauer group has the largest possible order using different fibrations into curves. We show that if such a surface admits a conic fibration, then it always has a rational point. We also answer a question of Várilly-Alvarado and Viray by showing that the Brauer groups these surfaces cannot be vertical with respect to any projection away from a plane. We conclude that the available techniques for proving existence of rational points or even Zariski density do not directly apply if there is no Brauer--Manin obstruction to the Hasse principle.

In passing we pick up the first examples of quartic del Pezzo surfaces with a Brauer group of order~$4$ for which the failure of the Hasse principle is explained by a Brauer--Manin obstruction.
\end{abstract}

\maketitle

\date{\today}

\maketitle

\thispagestyle{empty}
\setcounter{tocdepth}{1}

\section{Introduction}

The famous Hasse--Minkowski theorem gives precise conditions for a quadratic form to have a non-trivial zero. For the solubility of a system of two quadratic forms there are still many open questions. In this paper we restrict to the study of smooth intersections of two quadrics in five variables. Such a system describes a geometric object $X \subseteq \mathbb P^4$ called a \textit{del Pezzo surfaces of degree $4$}.

As in the Hasse--Minkowski theorem, a natural first step is to study the inclusion $X(\Q) \subseteq X(\A_\Q)$ of the rational points into the \textit{adelic points}. Since we are considering homogeneous equations we can identify  $X(\A_\Q)$ with $\prod_{p \leq \infty} X(\Q_p)$, that is, an adelic solution consists of local solutions in each completion of $\Q$. We say that a class of varieties satisfies the \textit{Hasse principle} if for each member of the family the existence of local solutions implies the existence of a global solution. In general the Hasse principle can fail. One such example was the quartic del Pezzo surface
$$
\begin{cases}
x^2-5y^2=uv;\\
x^2-5z^2=(u+v)(u+2v),
\end{cases}
$$
by Birch and Swinnerton-Dyer \cite{BSD}.

This failure can be explained by the Brauer--Manin obstruction, as introduced by Manin \cite{ManinICM}; any element $\cuQ$ in the Brauer group of $X$ determines an intermediate set
\[
X(\Q) \subseteq X(\A_\Q)^{\cuQ} \subseteq X(\A_\Q)
\]
which can be used in some cases to show that $X(\Q)=\emptyset$ even if there are adelic points on $X$. It has been conjectured by Colliot-Th\'el\`ene and Sansuc \cite{CTSansuc} that for varieties such as del Pezzo surfaces, the Brauer--Manin obstruction is the \textit{only obstruction to the Hasse principle}, that is, $X(\Q)$ is empty if and only if $X(\A_{\Q})^{\Br} := \bigcap_{\cuQ \in \Br X} X(\A_{\Q})^{\cuQ}$ is.

Pioneering work of Swinnerton-Dyer shows that for quartic del Pezzo surfaces the Brauer group $\Br X/\Br_0 X$ is isomorphic to $\left(\Z/2\Z \right)^i$ for $i \in\{0,1,2\}$ \cite{SDBrauerGroupCubicSurfaces}. Furthermore, the Brauer group can be explicitly computed from the equations \cite{BBFL}, \cite{VAV}.

The conjecture by Colliot-Th\'el\`ene and Sansuc was proven by Wittenberg \cite{WittenbergBook} for quartic del Pezzo surface with a trivial Brauer group subject to the Schinzel hypothesis and the conjectured finiteness of Tate--Shafarevich groups of elliptic curves. These techniques were then applied by Várilly-Alvarado and Viray for certain surfaces with a Brauer group of order $2$.

We will study the arithmetic of quartic del Pezzo surfaces with a Brauer group of order $4$, using fibration $X \dashrightarrow \mathbb P^1$ into curves. First we show that the existence of a commonly studied fibration implies the Hasse principle.

\begin{thm}[Thm.~\ref{thm:conicfibrations}]\label{thm:thm1}
Let $X$ be a quartic del Pezzo surface with $\# \Br X/\Br_0 X =4$ over a number field $K$. If $X$ admits a conic fibration $X \dashrightarrow \mathbb P^1$ then $X(K)\neq \emptyset$.
\end{thm}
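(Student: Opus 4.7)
The plan is to invoke the fibration method for conic bundles to reduce the statement to the absence of a Brauer--Manin obstruction, and then to eliminate that obstruction by exploiting the structure imposed by the conic fibration. First I would resolve the indeterminacy of $\pi$ to obtain a conic bundle $\tilde\pi : \tilde X \to \mathbb P^1$. Since $\chi(X) = 8$ for a quartic del Pezzo surface, an Euler-characteristic count shows $\tilde\pi$ has exactly $8-4=4$ degenerate geometric fibers. By the theorem of Colliot-Th\'el\`ene--Sansuc--Swinnerton-Dyer and Salberger on conic bundles over $\mathbb P^1$ with at most $5$ degenerate geometric fibers, the Brauer--Manin obstruction is the only obstruction to the Hasse principle. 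Passing between $X$ and $\tilde X$ preserves both $\Br$ and the (non)existence of $K$-points, so we may assume $X(\A_K) \ne \emptyset$ (the statement is vacuous otherwise) and reduce to showing $X(\A_K)^{\Br X} \ne \emptyset$.

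Next I would study the Galois representation $\rho : \mathrm{Gal}(\bar K/K) \to W(D_5)$ on $\Pic \bar X$. The hypothesis $\# \Br X/\Br_0 X = 4$ says $\H^1(\mathrm{im}\,\rho, \Pic \bar X) \cong (\Z/2\Z)^2$, while the existence of a $K$-rational conic fibration means $\mathrm{im}\,\rho$ fixes a class $F \in \Pic \bar X$ with $F^2 = 0$ and $-K_X \cdot F = 2$. These two constraints drastically restrict $\mathrm{im}\,\rho$ up to conjugacy in $W(D_5)$, and I would enumerate the remaining cases, computing explicit quaternion-algebra generators of $\Br X/\Br_0 X$ via the methods of \cite{BBFL, VAV}. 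For each generator $\cuQ$ the goal is to verify that it is \emph{vertical}, i.e.\ lies in $\pi^* \Br K(\mathbb P^1)$ modulo $\Br_0 X$. For vertical classes the Brauer--Manin pairing of an adelic point $(x_v)$ with $\cuQ$ reduces to the pairing of $(\pi(x_v))$ on the base, which vanishes by reciprocity on $\mathbb P^1$; this lets one find $t_0 \in \mathbb P^1(K)$ whose fiber $\pi^{-1}(t_0)$ is a smooth conic over $K$ with local points at every place, hence a $K$-point by Hasse--Minkowski.

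The main obstacle is handling Brauer classes which are not obviously vertical. In general the vertical subgroup $\Br_{\mathrm{vert}}\tilde X := \pi^*\Br K(\mathbb P^1) \cap \Br \tilde X$ has index at most $2$ in $\Br \tilde X$, and the delicate point is to show that under our joint hypotheses $\#\Br X/\Br_0 X = 4$ and existence of the fibration, the vertical subgroup actually equals $\Br X$ modulo constants. Once this is established, Salberger's version of the fibration method (via Dirichlet's theorem on primes in arithmetic progressions) supplies the required $t_0$. Carrying out the Galois-theoretic classification and the verticality check in each case constitutes the technical heart of the argument.
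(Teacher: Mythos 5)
There are two genuine gaps, and together they mean the proposal cannot prove the theorem as stated. First, the theorem carries no local-solubility hypothesis: it asserts unconditionally that $X(K)\ne\emptyset$. Your reduction ``we may assume $X(\A_K)\ne\emptyset$ (the statement is vacuous otherwise)'' is a logical error --- if $X(\A_K)=\emptyset$ then the conclusion would be \emph{false}, so that case must be ruled out, not assumed away. Second, even granting adelic points, you reduce to showing $X(\A_K)^{\Br X}\ne\emptyset$ but never establish it. Verticality does not do this for you: for a conic bundle every class of $\Br X$ is automatically pulled back from $\Br K(\mathbb P^1)$ (the Brauer group of a conic over $K(\mathbb P^1)$ is a quotient of $\Br K(\mathbb P^1)$), yet the Brauer--Manin pairing of an adelic point against such a class does \emph{not} vanish by reciprocity --- otherwise no conic bundle could ever carry a Brauer--Manin obstruction, contradicting the Ch\^atelet examples. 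The fibration method of Colliot-Th\'el\`ene--Sansuc--Swinnerton-Dyer and Salberger takes the nonemptiness of the Brauer--Manin set as an \emph{input}; it does not produce it. (The blanket claim for ``at most $5$ degenerate geometric fibres'' also overstates what is known; the relevant unconditional result is the $4$-fibre case.)

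The paper's proof goes a different and much shorter way, and your Galois-theoretic step, pushed slightly further, already finishes the argument without any of the fibration machinery. Among the $196$ conjugacy classes of subgroups $H\subseteq W_5$, exactly four give $\H^1(H,\Pic\bar X)\cong(\Z/2\Z)^2$. In three of them $(\Pic\bar X)^{H}$ has rank $1$, so $\Pic X$ is free of rank $1$ containing the canonical class, and no Galois-invariant class of self-intersection $0$ exists: these surfaces admit no conic fibration at all. In the single remaining case $(\Pic\bar X)^{H}$ has rank $2$ and is accounted for by a Galois-stable pair of exceptional curves with intersection number $1$; the intersection point of those two conjugate lines is a $K$-rational point. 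So the two hypotheses force $X(K)\ne\emptyset$ directly, with no appeal to local solubility, verticality, or Dirichlet's theorem.
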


The work of Várilly-Alvarado and Viray in the case of a smaller Brauer group, however uses maps $X \dashrightarrow \mathbb P^1$ obtained by embedding $X$ anticanonically and projecting away from a plane. They ask if a Brauer group of order $4$ can be ``vertical'' with respect to such maps. The second result of this paper is that this is not the case.

\begin{thm}[Thm.~\ref{thm:nonewpoints}]
Let $X \subseteq \mathbb P^4$ be an anticanonically embedded quartic del Pezzo surface over a number field $K$ with $\# \Br X/\Br_0 X =4$ and $X(K)=\emptyset$. Then $\Br X$ is not vertical with respect to a map $f \colon X \dashrightarrow \mathbb P^1$ obtained by projecting away from a plane.
\end{thm}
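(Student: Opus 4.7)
My plan is to argue by contradiction: suppose the whole of $\Br X / \Br_0 X$ is vertical with respect to some projection $f \colon X \dashrightarrow \mathbb P^1$ from a plane $\Lambda \subseteq \mathbb P^4$, and derive a $K$-rational point on $X$. Geometrically, the indeterminacy locus of $f$ is $B = \Lambda \cap X$, a zero-dimensional subscheme of degree $4$. In the generic case $B$ is reduced, consisting of four geometric points $P_1, \ldots, P_4$ permuted by $G_K$; since $X(K) = \emptyset$, the Galois orbits among them all have size at least $2$. Blowing up $B$ yields a smooth surface $\tilde X$ with $\Br \tilde X = \Br X$ and a genuine genus-one fibration $\tilde f \colon \tilde X \to \mathbb P^1$ whose generic fibre $C$ is a smooth genus-one curve over $K(t)$ carrying a $K(t)$-rational divisor of degree $4$ coming from the exceptional divisors. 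Non-generic arrangements of $\Lambda$ (when $B$ is non-reduced, or $\Lambda$ contains a line or a conic of $X$) would be treated in a parallel but separate analysis.

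Next I would characterise vertical Brauer classes via restriction to the generic fibre: by the Leray spectral sequence the vertical subgroup $\Br_{\mathrm{vert}} \tilde X$ is precisely the kernel of the restriction map $\Br \tilde X \to \Br C / \Br K(t)$, so verticality means that every element of $\Br X/\Br_0 X \cong (\Z/2\Z)^2$ vanishes in $\Br C / \Br K(t)$. Combining an explicit cocycle description of $\Br X / \Br_0 X$ in terms of the Galois action on $\Pic \bar X$ --- equivalently, on the sixteen lines and the five pencils of conics --- with concrete representatives of the two generators as quaternion algebras built from the equations of the two quadrics cutting out $X$, this vanishing translates into a concrete Galois-theoretic condition on the configuration of $\Lambda$ relative to the $P_i$ and the conic pencils.

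The conclusion I would aim for is that this condition forces either a $G_K$-stable conic pencil on $X$ --- whose existence, by Theorem~\ref{thm:conicfibrations}, contradicts $X(K) = \emptyset$ --- or a $G_K$-fixed point among the $P_i$, which is itself a $K$-rational point of $X$ and contradicts $X(K) = \emptyset$ directly. The principal obstacle is this middle step: one has to enumerate the subgroups of $W(\mathrm{D}_5)$ whose $\H^1$ on $\Pic \bar X$ has order~$4$ and, for each, verify that verticality with respect to any projection from a plane forces one of the two fixed configurations. I expect this combinatorial analysis to be delicate but tractable given the explicit classifications of such Galois actions in \cite{BBFL} and \cite{VAV}.
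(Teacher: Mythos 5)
Your high-level strategy (blow up the base locus, pass to the genus-one fibration, characterise vertical classes as those killed on the generic fibre) is sound as far as it goes and is genuinely different from the paper's route, but the step you yourself flag as ``the principal obstacle'' is not a verification to be deferred --- it is the entire content of the theorem, and as described it would not go through. Verticality with respect to $f$ is not a condition on the Galois module $\Pic \bar X$ alone: it depends on the chosen plane $\Lambda$ through the splitting fields of the components of the reducible fibres of $\tilde f$, and these vary with $\Lambda$ inside the six-dimensional Grassmannian of planes in $\mathbb P^4$. So enumerating the subgroups of $W_5$ with $\H^1(H,\Pic\bar X) \cong (\Z/2\Z)^2$ cannot by itself decide whether some special $\Lambda$ makes the group vertical; you would still have to analyse, for each such $\Lambda$, which hyperplane sections through the base locus split into pairs of conics and how Galois permutes the components. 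Moreover, your proposed dichotomy (a Galois-stable conic pencil, or a rational point among the base points) is asserted rather than derived, and your sketch contains no mechanism by which the hypothesis $\#\Br X/\Br_0 X = 4$ enters to force it.

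For contrast, the paper exploits the reformulation of V\'arilly-Alvarado and Viray: verticality with respect to a projection from a plane is witnessed by a rational point of the auxiliary variety $Y^{(X)}$ of triples $(P_0,P_1,P_2)$ of rational points on the three rank-$4$ quadrics $Q_{T_i}$ of the pencil, subject to a rank condition on the matrix of gradients. The proof of Theorem~\ref{thm:nonewpoints} is then a direct computation: the rank condition lets one normalise the linear relation to $\ell_0-\ell_1+\ell_2=0$, which pins down $P_0$ in terms of $P_1$ and $P_2$ and reduces the three quadric equations to a single binary quadratic form in $u_1-u_2$ and $v_1-v_2$ with discriminant $d_0d_1d_2$. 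The order-$4$ hypothesis enters exactly here: $\ep d_0d_1d_2 \in \Q^{\times,2}$ together with $\ep \notin \Q^{\times,2}$ forces $d_0d_1d_2 \notin \Q^{\times,2}$, so the form has only the trivial zero, the three points coincide up to sign changes in the last three coordinates, and the common point lies in $X(\Q)$. Any completion of your argument would need an analogous pinch point where the order-$4$ hypothesis does real work; locating it in the fibration picture is precisely what your sketch leaves open.
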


This shows that the techniques of \cite{CTSSD}, \cite{WittenbergBook} and \cite{VAV} cannot be directly applied to prove that the Brauer--Manin obstruction is the only one to the Hasse principle for quartic del Pezzo surfaces with a Brauer group of order $4$.

Surfaces with two independent classes in the Brauer group have rarely shown up in the literature. Jahnel and Schindler \cite{JS} prove that quartic del Pezzo surfaces with a Brauer group of order $2$ (and even those for which the conjecture is true) are Zariski dense in the moduli space of all quartic del Pezzo surfaces. In contrast, Mitankin and Salgado study a subfamily where infinitely many members have a Brauer group of order $4$, but all of these surfaces admit a conic fibrations and hence have a rational point in the light of Theorem~\ref{thm:thm1}.

We will restrict to a different subfamily in which each member has a Brauer group of order $4$, which we will show to be non-empty.

\begin{thm}[Thm.~\ref{thm:failureWA}, Thm.~\ref{thm:curlyABorC}]\label{thm:intro2}
Let $X$ be a quartic del Pezzo surface with $\#\Br X/\Br_0 X =4$ given by a system
\[
\begin{cases}
y^2-pz^2 = (A_1u+B_1v)(C_1u+D_1v);\\
z^2-pz^2 = (A_2u+B_2v)(C_2u+D_2v),
\end{cases}
\]
where $p$ is an odd prime. Then $X$ fails weak approximation and there is at most one element $\cuQ \in \Br X/\Br_0 X$ for which $X(\A_{\Q})^{\cuQ}=\emptyset$.
\end{thm}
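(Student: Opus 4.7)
The plan is to treat the two parts of the theorem separately, both built on a common first step: an explicit description of $\Br X / \Br_0 X$ for this family. The specific shape of the system — a simultaneous diagonalisation whose split part involves only the square root $\sqrt{p}$ — suggests that one can generate $\Br X / \Br_0 X \cong (\Z/2\Z)^2$ by two quaternion classes, one coming from each factorised quadric, concretely of the form $\cuA = (p, A_1 u + B_1 v)$ and $\cuB = (p, A_2 u + B_2 v)$ modulo $\Br_0 X$. This computation should follow from the algorithm of Bright--Bruin--Flynn--Logan \cite{BBFL} or the explicit recipe of Várilly-Alvarado--Viray \cite{VAV} cited in the introduction.

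To prove failure of weak approximation I would analyse the local invariant map $X(\Q_v) \to \tfrac12\Z/\Z$, $P_v \mapsto \inv_v \cuA(P_v)$, for a single generator, say $\cuA$. Away from $2$, $p$, and the primes of bad reduction the Hilbert symbol $(p, A_1 u + B_1 v)_v$ is trivial on local integral points, so the invariant vanishes identically. At one of the ramified places, however, the explicit formula for the Hilbert symbol will allow me to exhibit two local points with distinct invariants; combined with arbitrary choices of local points elsewhere, this yields two adelic points whose $\cuA$-Brauer sums differ. Since every rational point has Brauer sum zero, weak approximation must fail.

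For the bound on the number of obstructing classes there are three non-trivial elements $\cuA$, $\cuB$ and $\cuA + \cuB$, and it suffices to exhibit, for two of them, some adelic point on which the corresponding Brauer--Manin sum vanishes. The strategy is to tabulate the achievable values of $\inv_v \cuA$ and $\inv_v \cuB$ on $X(\Q_v)$ at each bad place, exploiting the visible symmetry of the system: swapping the two quadrics interchanges $\cuA$ and $\cuB$, so the analysis of $\cuB$ follows from that of $\cuA$ verbatim. Provided that at some bad place the invariant of $\cuA$ attains both values in $\tfrac12\Z/\Z$ on $X(\Q_v)$, I can adjust the local input there to make the global $\cuA$-sum vanish, and — by symmetry — likewise for $\cuB$. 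Consequently only the third class $\cuA + \cuB$ can obstruct.

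The main obstacle lies in the second part: verifying that the invariant of $\cuA$ is indeed surjective onto $\tfrac12\Z/\Z$ at some accessible place, under the sole hypothesis $\#\Br X / \Br_0 X = 4$. This hypothesis is itself a strong arithmetic condition on the coefficients $A_i, B_i, C_i, D_i$, so one expects it to force enough ramification of $\cuA$ (and, by symmetry, $\cuB$) somewhere; making this implication quantitative — especially in the delicate case $v = 2$, where the odd-prime hypothesis on $p$ will enter — is where I expect the bulk of the technical work. A careful comparison of the ramified places of the Hilbert symbols $(p, A_1u + B_1 v)$ and $(p, C_1 u + D_1 v)$ on $X$ (equal classes in $\Br X$, since their product is a norm from $\Q(\sqrt p)$), together with the analogous comparison for the second quadric, should provide enough flexibility to close the argument.
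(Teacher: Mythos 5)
There are two genuine problems with this plan. First, your description of $\Br X/\Br_0 X$ is wrong: both of your proposed generators are vertical for the projection to $(u:v)$, and the vertical classes form a group of order at most $2$. Indeed, since $y^2-px^2$ (resp.\ $z^2-px^2$) is a norm from $\Q(\sqrt p)(X)$, we have $\left(p,\tfrac{(A_1u+B_1v)(C_1u+D_1v)}{\ell^2}\right)=0$ for any linear factor $\ell$ of either right-hand side; consequently the two factors of a given quadric define the same class and the group generated by all such quotients is generated by the single element $\left(p,\tfrac{A_1u+B_1v}{A_2u+B_2v}\right)$. A second independent generator must involve $y$ and $z$ (the paper takes $\cuB=\left(p,\tfrac{z-y}{u}\right)$), and this is not cosmetic: Theorem~\ref{thm:nonewpoints} exists precisely because $\Br X$ cannot be vertical for these surfaces.

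Second, and more seriously, your argument for both conclusions hinges on showing that $\inv_v\cuA$ is surjective at some place and then deducing the same for $\cuB$ ``by symmetry'', so that only $\cuA+\cuB$ could obstruct. That intermediate claim is false: for the surface $Y_{13,2,6}$ of Section~6 the invariant of $\cuA$ is \emph{constant} at every place with total sum $\tfrac12$ — that is exactly how $\cuA$ obstructs the Hasse principle there — and dually $\cuB$ obstructs for $S_{13,153,179}$. So no fixed generator has a surjective invariant map uniformly over the family, and once the generators are corrected the swap of the two quadrics fixes $\cuA$ and exchanges $\cuB$ with $\cuC$ rather than $\cuA$ with $\cuB$. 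What is actually true, and is the heart of the paper's proof (Proposition~\ref{prop:surjectiveinvariantmap}), is the disjunction that \emph{at least one} of $\cuA$, $\cuB$, $\cuC$ has surjective invariant map at the single place $v=p$ (not $v=2$); which one it is depends on the $p$-adic valuations of the coefficients, and establishing this is an eight-case analysis producing explicit pairs of points of $X(\Q_p)$. Both statements of the theorem then follow from linearity: if one of $\inv_p\cuA$, $\inv_p\cuB$, $\inv_p\cuC=\inv_p\cuA+\inv_p\cuB$ is non-constant then at least two of them are, hence at least two of the sets $X(\A_\Q)^{\cuQ}$ are nonempty and at most one class can obstruct. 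Your proposal is missing both this disjunctive formulation and the case analysis that carries the entire weight of the argument.
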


Note the contrast with the result that if $X(\A_\Q)^{\Br} = \emptyset$ then there exists an $\cuQ \in \Br X$ such that $X(\A_{\Q})^{\cuQ}=\emptyset$ for any quartic del Pezzo surface \cite[Rem.~2 following Lem.~3.4]{CTPoonen}.

For every quartic del Pezzo surface $X$ with $\Br X/\Br_0 X$ has order $4$, we can write down two explicit generators $\cuA_X$ and $\cuB_X$, where $\cuA_X$ is uniquely defined and $\cuB_X$ is any other non-trivial element. We produce examples in which either generator obstructs the Hasse principle.

\begin{thm}
Consider the surfaces
$$
Y\colon\begin{cases}
y^2-13x^2=uv;\\
z^2-13x^2=(2u-13v)(u-6v),
\end{cases}
$$
and
$$
S \colon \begin{cases}
y^2-13x^2=uv;\\
z^2-13x^2=(u+v)(153u+179v).
\end{cases}
$$
\begin{enumerate}
\item[(a)] The surfaces $Y$ and $S$ are everywhere locally soluble.
\item[(b)] The Brauer groups $\Br Y/\Br_0 Y$ and $\Br S/\Br_0 S$ are both isomorphic to $(\mathbb Z/2\mathbb Z)^2$.
\item[(c)] We have $Y(\A_\Q)^{\cuA_Y} = \emptyset$ for $\cuA_Y = \left(13,\frac{u-6v}v\right) \in \Br Y$, and $S(\A_\Q)^{\cuB_S} = \emptyset$ for $\cuB_S = \left(13,\frac{y+z}u\right) \in \Br S$.
\end{enumerate}
\end{thm}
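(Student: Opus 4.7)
The proof splits naturally into the three claims, with part~(c) carrying the bulk of the work.

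For part~(a), I would verify solubility place by place. The archimedean point is obvious, and for odd primes of good reduction away from $13$ the Hasse--Weil bound combined with Hensel's lemma guarantees a $\Q_p$-point. Only the finitely many primes of bad reduction (principally $p\in\{2,13\}$ and the primes dividing the coefficient data) require explicit treatment; at each of these one writes down a smooth $\Q_p$-point by fixing values for some variables and solving the remaining conic, then lifts by Hensel.

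For part~(b), I would invoke the explicit Brauer group description established earlier in the paper for the subfamily of Theorem~\ref{thm:intro2} with $p=13$. Using the general recipe producing the canonical class $\cuA_X$ together with a second generator, it suffices to check that $\cuA_Y=(13,(u-6v)/v)$ on $Y$ and $\cuB_S=(13,(y+z)/u)$ on $S$ are unramified, non-trivial modulo constants, and complete a basis of $\Br X/\Br_0 X$; all three properties can be read off by computing residues of the corresponding quaternion algebras along the singular fibres of a conic-bundle structure on each surface.

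Part~(c) is the heart of the theorem. For each surface I would compute the local invariant of the given Brauer class at every place and show the resulting sum is the constant $1/2\in\Q/\Z$. The guiding principle is that the Hilbert symbol $(13,\cdot)_p$ vanishes whenever $13\in\Q_p^{\times 2}$, so contributions can only come from the real place, $p=2$, $p=13$, and primes $p$ inert in $\Q(\sqrt{13})$. At an inert prime the invariant is controlled by the parity of $v_p$ of the second entry, and the norm equations $y^2-13x^2=uv$ and $z^2-13x^2=\cdots$ severely constrain these parities because their left-hand sides, being norms from the unramified extension $\Q_p(\sqrt{13})$, have even valuation. A systematic case analysis on the $p$-adic valuations of $x,y,z,u,v$ at the remaining places then pins down each local invariant and yields a constant, nonzero sum.

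The main obstacle I expect is the analysis at $p=2$, where the Hilbert symbol is notoriously delicate and several valuation strata need separate treatment, and at $p=13$ itself, where $13$ ramifies and the simple parity argument breaks down. In each such stratum one must replace the stated representative of the Brauer class by an equivalent representative on an appropriate open subset of $X$ on which the invariant is manifest, using the defining equations to move between representatives. This is where the explicit arithmetic of the specific coefficients $6$, $153$, $179$ enters, and verifying the constancy of the sum at the exceptional places is the most delicate step.
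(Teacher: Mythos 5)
Your overall target for part~(c) is the right one and matches the paper's: one shows that $\sum_v \inv_v$ of the given class is \emph{constantly} $\tfrac12$, by proving the local invariant vanishes identically at every $v\neq 13$ and is identically $\tfrac12$ at $v=13$. But the route you sketch diverges in two places, one of which is a genuine gap. First, the decisive step --- the computation at $v=13$ --- is exactly where the specific coefficients live, and you leave it as ``a systematic case analysis''. For $Y$ the paper shows that every primitive local point has $\v_{13}(u)=0$ (if $13\mid u$ then successively $13\mid y,z,v,x$), whence $\inv_{13}\cuA_Y$ is the constant $\bigl(\tfrac{2}{13}\bigr)=-1$ because $13\equiv 5\bmod 8$; for $S$ the key is the congruence $4\cdot153\equiv 4\cdot179\equiv 1\bmod 13$, which yields $z\pm y\equiv \tfrac{\ep}{2}u\bigl(\tfrac yu\pm\ep\bigr)^2 \bmod 13$ and hence $\inv_{13}\cuB_S\equiv\tfrac12$ again via $\bigl(\tfrac2{13}\bigr)=-1$. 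Without these identities the ``parity of valuations'' heuristic does not close the argument at the ramified place, and nothing in your outline produces them. Second, for the places $v\ne 13$ the paper does not do a stratified valuation analysis at all: it invokes the good-reduction/effectivity results of Bright and of Colliot-Th\'el\`ene--Skorobogatov to see that $\inv_v$ is constant and then evaluates at one point. Your hands-on alternative is workable in principle (this is the Birch--Swinnerton-Dyer style computation) but is substantially heavier, particularly at $v=2$ where $13\notin\Q_2^{\times2}$, and you have not carried it out.

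There is also a concrete error in your plan for part~(b): you propose to verify the classes by computing residues along the singular fibres of a conic-bundle structure on $Y$ and $S$. These surfaces admit no conic fibration over $\Q$: by Theorem~\ref{thm:conicfibrations} a quartic del Pezzo surface with Brauer group of order $4$ and a conic fibration has a rational point, which part~(c) rules out (equivalently, $\Pic$ has rank $1$ here). The correct verification is the one you mention first, namely that $Y=Y_{13,2,6}$ and $S=S_{13,153,179}$ lie in the family of Proposition~\ref{pro:sys_subfam} (conditions (C1), (C2) hold), so Proposition~\ref{pro:gen_sys} gives $\Br/\Br_0\cong(\Z/2\Z)^2$ with the explicit generators of \eqref{Br_subfam}; one then identifies $(13,\tfrac{u-6v}{v})$ with $\cuA$ using the norm relations $(13,uv)=(13,(2u-13v)(u-6v))=0$ on $Y$. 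Part~(a) as you describe it is fine and is essentially Proposition~\ref{pro:loc_sol_subfam} together with the explicit points at $2$ and $13$ extracted from the proof of Proposition~\ref{prop:surjectiveinvariantmap}.
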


Using Theorem~\ref{thm:intro2} we see that $\cuA_Y$ and $\cuB_S$ are the unique elements in respectively $\Br Y$ and $\Br S$ with these properties.

To prove for a general quartic del Pezzo surface with a Brauer group of order $4$ that $X(\Q)\ne\emptyset$ when neither $\mathcal A_X$, $\mathcal B_X$ nor $\cuA_X+\cuB_X$ obstructs the Hasse principle is still open.

\subsection*{Acknowledgements}

This paper is a product of an internship of the second author at IST Austria under the supervision of the first author.

\section{Preliminaries}

Let us fix our terminology.

\begin{defi}
Let $k$ be a field. A $k$-scheme $X$ is called \textit{nice} if it is proper, geometrically integral and smooth over $k$. A \textit{surface} will be a nice $2$-dimensional $k$-scheme.
\end{defi}

All varieties under consideration in this paper will be nice $\mathbb Q$-surfaces. So by the properness we can identify $X(\A_\Q)$ with $\prod_{p\leq \infty} X(\Q_p)$ where we write $\Q_\infty := \mathbb R$. We have the diagonal embedding $X(\Q) \hookrightarrow X(\A_\Q)$.

We will say that a class $\mathcal S$ of varieties satisfies the \textit{Hasse principle} when $X(\Q)=\emptyset$ if and only if $X(\A_\Q) = \emptyset$ for all $X \in \mathcal S$. If $X(\Q)$ is dense in $X(\A_\Q)$ we say that X satisfies \textit{weak approximation}.

\subsection*{The Brauer--Manin obstruction}

The failure of the Hasse principle or weak approximation can be explained using the Brauer group. So let us consider $\Br X := \H^2(X_{\text{\'et}},\Gm)$ and its natural subgroup $\Br_0 X := \Im(\Br \Q \to \Br X)$. Since $X$ is nice we have an inclusion $\Br X \hookrightarrow \Br \kappa(X)$, and we will only be interested in elements $\mathcal Q \in \Br X$ of order $2$. Hence such elements are always represented by a quaternion algebra $(f,g)$ over the function field $\kappa(X)$ of $X$.

Recall the \textit{invariant map} $\inv_v \mathcal Q \colon X(\Q_v) \to \Q/\Z$ of an element $\mathcal Q=(f,g) \in \Br X[2]$ at a place $v$ \cite[Thm.~1.5.36]{Poonen}, which are constant for $\mathcal Q \in \Br_0 X$. At a point $P \in X(\Q)$ for which $f,g\in \mathcal O^\times_{X,P}$ the invariant $\inv_v \mathcal Q(P)$ coincides with the Hilbert symbol $(f(P),g(P)) \in \{\pm 1\}$ after identifying the groups $\{\pm 1\}$ and $\frac12\Z/\Z$. By abuse of terminology we will say that $\inv_v\mathcal Q$ is \textit{surjective} if $\# \Im(\inv_v \mathcal Q) = 2$.

For $\mathcal Q \in \Br X$ we can consider
\[
X(\A_\Q)^{\mathcal Q}  := \{(P_v)_v \in X(\A_\Q)\mid \sum_{v \leq \infty} \inv_v \mathcal Q(P_v) =0\}.
\]
By global reciprocity we have $X(\Q) \subseteq X(\A_\Q)^{\cuQ} \subseteq X(\A_\Q)$ or even $X(\Q) \subseteq X(\A_\Q)^{\Br} \subseteq X(\A_\Q)$ for $X(\A_\Q)^{\Br} := \bigcap X(\A_\Q)^{\cuQ}$.

If $X(\A_\Q) \ne \emptyset$, but $X(\A_\Q)^{\Br} = \emptyset$, then the Hasse principle fails for $X$ and we say there is a \textit{Brauer--Manin obstruction to the Hasse principle}. If $X(\A_\Q)^{\Br} \subsetneq X(\A_\Q)$, then $X$ cannot satisfy weak approximation, and there is a \textit{Brauer--Manin obstruction to weak approximation}.

Note that if an invariant map of $\mathcal Q$ is surjective, then there is a Brauer--Manin obstruction to weak approximation, but not necessarily to the Hasse principle.

\subsection*{Quartic del Pezzo surfaces}

We will be interested in a particular type of surface, namely del Pezzo surface of degree $4$. For a general treatise on general del Pezzo surfaces and their arithmetic the reader is referred to \cite{arithmeticdps}. We will give the following characterisation of quartic del Pezzo surfaces, which we will take as the definition.

\begin{defi}
A \textit{del Pezzo surface of degree $4$} is a surface $X \subseteq \mathbb P^4$, hence in particular smooth, given by two quadratic forms $Q=0=\tilde Q$.
\end{defi}

It is known that the Brauer group of a quartic del Pezzo surface modulo constants is either trivial, of order $2$ or isomorphic to the Klein four-group. Algorithms to determine this isomorphism class and explicit generators are given in both \cite{BBFL} and \cite{VAV}. Let us introduce some notation from the latter algorithm to determine if we are dealing with a Brauer group of order $4$.

Represent the quadratic forms $Q$ and $\tilde Q$ by symmetric matrices $M$ and $\tilde M$. For a point $T=(\kappa \colon \lambda) \in \mathbb P^1(\bar{\Q})$ we define the symmetric matrix $M_T = \kappa M + \lambda \tilde M$, with associated quadratic form $Q_T$ over $\kappa(T)$. Let $f$ be the discriminant of $M_T$, then $\mathscr{S}=V(f)\subseteq\mathbb{P}^1$ is the locus of the degenerate fibres. For the quadratic forms $Q_T$ of rank $4$ we will need the determinant $\ep_T$ of its restriction to a linear subspace of codimension $1$ on which it is nondegenerate. The invariant $\ep_T$ is well defined up to squares.

\begin{pro}[{\cite[\textsection 4.1]{VAV}}]\label{pro:Brauergrouporder4}
Let $X \subseteq \mathbb P^4$ be a quartic del Pezzo surface. Then $\#\Br X/\Br_0 X =4$ if and only if there exists an $\ep\not\in\Q^{\times, 2}$, and three degree 1 points $T_i \in \mathscr{S}$ such that each $Q_{T_i}$ has rank $4$ and satisfies $\ep\cdot\ep_{T_i}\in\Q^{\times, 2}$.
\end{pro}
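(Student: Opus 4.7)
The plan is to reduce the computation of $\Br X / \Br_0 X$ to a Galois cohomology calculation and then translate it into the concrete condition on the pencil of quadrics. First, since $X$ is a geometrically rational surface, $\Br \bar X = 0$, so the Hochschild--Serre spectral sequence gives an isomorphism
\[
\Br X / \Br_0 X \;\cong\; \H^1(G_\Q, \Pic \bar X),
\]
(using that $X(\A_\Q)\ne\emptyset$ implies the appropriate edge map is trivial, or by passing to an open subscheme in the general case). So the problem becomes computing this first cohomology of the permutation-like $G_\Q$-module $\Pic \bar X \cong \Z^6$.

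Next I would describe $\Pic\bar X$ and the Galois action combinatorially via the pencil of quadrics $M_T=\kappa M+\lambda\tilde M$. The discriminant locus $\mathscr S$ consists of (generically five) points, and at each $T_i\in\mathscr S$ with $Q_{T_i}$ of rank~$4$ the singular quadric is a cone over a smooth quadric surface in $\P^3$, whose two rulings pull back to two pencils of conics on $\bar X$. These $10$ pencils, together with the hyperplane class, generate $\Pic \bar X$, and Swinnerton-Dyer's classification shows the image of $G_\Q\to \mathrm{Aut}(\Pic\bar X)$ is a subgroup of $W(D_5)$ whose action is entirely determined by two pieces of data: (i) the permutation of the five points of $\mathscr S$ and (ii) for each $G_\Q$-orbit of rank-$4$ points $T_i$, whether the two rulings of $Q_{T_i}$ are fixed or swapped by the decomposition group at $T_i$. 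The latter is precisely controlled by the quadratic character $\ep_{T_i}\in \Q(T_i)^\times/\Q(T_i)^{\times,2}$: the rulings are defined over $\Q(T_i)$ exactly when $\ep_{T_i}$ is a square there.

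With this dictionary in place I would compute $\H^1(G_\Q,\Pic\bar X)$ directly. A convenient route is to use the exact sequence relating $\Pic\bar X$ to the root lattice spanned by differences of pairs of rulings and pull the first cohomology out of an explicit resolution, so that $\H^1(G_\Q,\Pic\bar X)$ is identified with the kernel of a map between $\F_2$-vector spaces indexed by the rank-$4$ points $T_i$ with $T_i\in\mathscr S(\Q)$ (sending each such $T_i$ to the class $\ep_{T_i}\in \Q^\times/\Q^{\times,2}$), modulo the image of the obvious diagonal. Writing this out, an element of $\H^1$ of order $2$ corresponds to an $\ep\in \Q^\times\setminus \Q^{\times,2}$ together with a subset of the $\Q$-rational rank-$4$ points of $\mathscr S$ whose $\ep_{T_i}$'s all equal $\ep$ mod squares, subject to a parity condition forcing the subset to have size at least three. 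The claim that $\#\Br X/\Br_0 X=4$ then becomes the statement that there exist three such points realising the same $\ep$, so that the subset they span has $\F_2$-dimension~$2$ in the kernel.

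The main obstacle is the combinatorial bookkeeping in the last step: one has to show carefully that three coinciding $\ep_{T_i}$'s are both necessary and sufficient, ruling out fake contributions from Galois-conjugate pairs of non-rational points in $\mathscr S$ and from the rank-$3$ degenerations. In particular, I would spend most effort verifying that the image of the diagonal map (coming from $\Br_0 X$) is generated exactly by the class corresponding to the product of all $\ep_{T_i}$, which forces the ``three-at-a-time'' condition rather than a ``two-at-a-time'' one. Once this is settled the equivalence stated in Proposition~\ref{pro:Brauergrouporder4} drops out.
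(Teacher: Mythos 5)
This proposition is not proved in the paper at all: it is imported verbatim from V\'arilly-Alvarado--Viray \cite[\textsection 4.1]{VAV}, so there is no in-paper argument to compare against. Your outline does follow the same strategy as the cited source (Hochschild--Serre to reduce to $\H^1(G_\Q,\Pic\bar X)$, the description of $\Pic\bar X$ via the two rulings of each of the five rank-$4$ quadrics in the pencil, with $\ep_{T}$ controlling whether the decomposition group at $T$ swaps the rulings), and two small points are actually easier than you suggest: over a number field $\H^3(k,\Gm)=0$, so $\Br X/\Br_0 X\cong\H^1(G_\Q,\Pic\bar X)$ needs no hypothesis on adelic points; and for $X$ smooth the discriminant quintic has distinct roots and every degenerate member of the pencil automatically has rank exactly $4$, so the ``rank-$3$ degenerations'' you propose to rule out never occur.

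The genuine gap is in the final combinatorial identification, which you defer and, as described, get slightly wrong. A single nontrivial class of order $2$ corresponds to an \emph{even-cardinality} Galois-stable subset of $\mathscr S$ (in the rational case, a \emph{pair} $\{T_i,T_j\}$) whose $\ep_{T}$'s lie in a common nontrivial square class $\ep$, taken modulo the full set $\mathscr S$; there is no ``parity condition forcing the subset to have size at least three'' for one class. The role of \emph{three} rational points $T_0,T_1,T_2$ with a common $\ep$ is to produce the three pairs $\{T_0,T_1\}$, $\{T_1,T_2\}$, $\{T_0,T_2\}$, which give the three nontrivial elements of a Klein four-group (their classes sum to zero), i.e.\ an $\F_2$-dimension $2$ subspace. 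Moreover the ``only if'' direction requires showing that no other Galois configuration on the five degenerate points (e.g.\ involving non-rational points of $\mathscr S$ or distinct square classes $\ep$) can yield $(\Z/2\Z)^2$; this is exactly the case analysis over the conjugacy classes of subgroups of the Weyl group that \cite{VAV} (and \cite{BBFL}) carry out, and your sketch does not address it. As written, the proposal is a correct plan whose decisive step is both incomplete and misstated; to make it a proof you would need to set up the exact sequence relating $\Pic\bar X$ to the permutation module on the ten rulings and compute the resulting kernel/cokernel explicitly.
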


\section{A Brauer group of order $4$}

From now on $X/\mathbb Q$ will be a quartic del Pezzo surface anticanonically embedded in $\mathbb P^4$. Such a surface is the intersection of two quadrics. We will be particularly interested in the $X$ for which the Brauer group modulo constants has order $4$. 

\begin{pro}\label{pro:gen_sys}
A quartic del Pezzo surface $X$ over $\mathbb Q$ satisfies $\Br X/\Br \mathbb Q \cong (\mathbb Z/2\mathbb Z)^2$ precisely if it can be given by a system of equations of the form
\begin{equation}\label{gen_sys}
\begin{cases}
d_0y^2-\ep x^2=a_0u^2+2b_0uv+c_0v^2;\\
d_1z^2-\ep x^2=a_1u^2+2b_1uv+c_1v^2,
\end{cases}
\end{equation}
such that
\begin{enumerate}
\item $\ep\not\in\Q^{\times, 2}$,
\item $d_0=b_0^2-a_0c_0$ and $d_1=b_1^2-a_1c_1$,
\item $\ep d_0d_1d_2\in \Q^{\times, 2}$ where $d_2:=(b_1-b_0)^2-(a_1-a_0)(c_1-c_0)$, and
\item the quadratic forms $a_iu^2+2b_iuv+c_iv^2$ do not have a common projective root.
\end{enumerate}
\end{pro}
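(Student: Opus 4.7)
The plan is to deduce both implications from Proposition~\ref{pro:Brauergrouporder4}, with the three rank-$4$ singular fibres placed at $T_0=(0{:}1)$, $T_1=(1{:}0)$, $T_2=(1{:}-1)$ inside the pencil $\kappa Q+\lambda\tilde Q$.

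For the ``if'' direction, suppose a system of the form~\eqref{gen_sys} satisfying (1)--(4) is given. The matrix $M_T$ of $Q_T$ is block-diagonal with a $3\times 3$ diagonal part in $(x,y,z)$ and a $2\times 2$ block in $(u,v)$, so one can read off that $T_0,T_1,T_2\in\mathscr{S}$ and each $Q_{T_i}$ has rank $4$, using $\ep\ne 0$ from (1) and $d_0,d_1,d_2\ne 0$ (implicit in (3)). A direct block-determinant computation, using (2) to write $a_ic_i-b_i^2=-d_i$, yields
\[
\ep_{T_0}\equiv\ep_{T_1}\equiv\ep,\qquad \ep_{T_2}\equiv d_0d_1d_2 \pmod{\Q^{\times 2}},
\]
so (1) and (3) together verify the criterion of Proposition~\ref{pro:Brauergrouporder4}. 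Smoothness of $X$ is a separate Jacobian check: the only candidate singular points have $x=y=z=0$ and $(u{:}v)$ a common projective root of the two binary quadratic forms in $u,v$, which is forbidden by (4).

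For the converse, Proposition~\ref{pro:Brauergrouporder4} supplies a non-square $\tilde\ep\in\Q^\times$ and three degree-$1$ points $T_i\in\mathscr{S}$ of rank $4$ with $\tilde\ep\,\ep_{T_i}\in\Q^{\times 2}$. Applying a $\mathrm{PGL}_2(\Q)$-change of basis in the pencil, one moves these to $(0{:}1),(1{:}0),(1{:}-1)$. The $1$-dimensional kernels $K_i=\ker M_{T_i}$ are then $\Q$-linearly independent: if all three lay in a common $2$-plane $V\subseteq\Q^5$, the three restrictions $M_{T_i}|_V$ would be degenerate symmetric forms on $V$, forcing the whole restricted pencil $M_T|_V$ to be degenerate (a line meets a conic in at most two points), contradicting the existence of rank-$5$ fibres in a smooth pencil. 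Choosing $\Q$-rational generators $e_y\in K_0$, $e_z\in K_1$, $e_x\in K_2$ and extending to a basis by $e_u,e_v$, the kernel conditions force $Q$ to involve no $z$, $\tilde Q$ no $y$, and (because $(M-\tilde M)e_x=0$) the $x^2$- and $xu,xv$-coefficients of $Q$ and $\tilde Q$ to coincide. Successive square completions in $x$ (affecting both equations), then $y$ in $Q$ and $z$ in $\tilde Q$, bring the system to
\[
Q=-\ep_0x^2+\beta y^2-(au^2+2buv+cv^2),\qquad\tilde Q=-\ep_0x^2+\gamma z^2-(a'u^2+2b'uv+c'v^2),
\]
with $\ep_0,\beta,\gamma$ and the discriminants $b^2-ac$, $b'^2-a'c'$ all non-zero by the rank-$4$ hypothesis.

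The main obstacle is the final rescaling enforcing condition (2). Direct determinant computation gives $\ep_{T_0}\equiv\ep_0\gamma(b'^2-a'c')$ and $\ep_{T_1}\equiv\ep_0\beta(b^2-ac)$ modulo squares, so the Brauer compatibility $\ep_{T_0}\equiv\ep_{T_1}\pmod{\Q^{\times 2}}$ is equivalent to $\beta\gamma(b^2-ac)(b'^2-a'c')\in\Q^{\times 2}$. This is precisely what is needed to find $t\in\Q^\times$ and rational $s_y,s_z$ such that, after simultaneously scaling the equations by $t$ (which leaves the surface unchanged while multiplying each coefficient by $t$) and rescaling $y\to y/s_y$, $z\to z/s_z$, the ratios $\beta/(b^2-ac)$ and $\gamma/(b'^2-a'c')$ both become $1$; relabelling $(\beta,\gamma,a,b,c,a',b',c')$ as $(d_0,d_1,a_0,b_0,c_0,a_1,b_1,c_1)$ then gives condition (2). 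Condition (1) follows because the final $\ep_0$ lies in the square class of $\ep_{T_1}\equiv\tilde\ep$, hence is a non-square; condition (3) is the Brauer squareness at $T_2$ translated via $\ep_{T_2}\equiv\beta\gamma d_2$; condition (4) is exactly the smoothness check from the first direction applied to the resulting normal form.
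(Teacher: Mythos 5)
Your overall route is the same as the paper's: both directions are reduced to the criterion of Proposition~\ref{pro:Brauergrouporder4} by putting the pencil into the block form where each rank-$4$ member $M_{T_i}$ is a $2\times2$ block in $(u,v)$ plus a diagonal part in $(x,y,z)$ with one zero entry, and then matching the square classes of the $\ep_{T_i}$. Your ``if'' direction and your final rescaling step are correct: the computations $\ep_{T_0}\equiv\ep_{T_1}\equiv\ep$, $\ep_{T_2}\equiv d_0d_1d_2$, the identification of smoothness with condition (4), and the observation that $\ep_{T_0}\equiv\ep_{T_1}$ is exactly the solvability of $ts_y^2=\beta/(b^2-ac)$, $ts_z^2=\gamma/(b'^2-a'c')$ all check out. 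The main difference is that the paper simply cites \cite[Prop.~4.2]{VAV} for the simultaneous block-diagonalization, whereas you re-derive it; that is where the one real problem sits.

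The gap is in your proof that the three kernel lines $K_0,K_1,K_2$ are linearly independent. You argue that if they lay in a common $2$-plane $V$ then $\det(M_T|_V)$, a binary quadratic in $T$, would vanish identically, and you claim this ``contradicts the existence of rank-$5$ fibres.'' It does not: a rank-$5$ quadratic form can perfectly well restrict to a degenerate form on a $2$-plane (take $V$ spanned by an isotropic vector $e$ and a vector of $e^\perp$), and one can write down smooth pencils in $\mathbb P^4$ admitting a $2$-plane $V$ with $\det(M_T|_V)\equiv 0$. So the inference as stated fails. The correct argument uses the extra information that $V$ contains the kernels: if $k_i$ spans $K_i$ then $M_T(k_i,k_j)=0$ for all $T$ in the pencil and all $i\ne j$ (evaluate on the two degenerate members $M_{T_i},M_{T_j}$ and take linear combinations), so a linear dependence $k_2=\alpha k_0+\beta k_1$ forces $M_T(k_2,k_2)=\alpha M_T(k_0,k_2)+\beta M_T(k_1,k_2)=0$ for all $T$; then $[k_2]\in X$ is the vertex of the rank-$4$ quadric $Q_{T_2}$, hence a singular point of $X$, contradicting smoothness. (The same orthogonality is what you implicitly need anyway to see that $Q(e_x)=\tilde Q(e_x)\ne 0$, $Q(e_y)\ne 0$, $\tilde Q(e_z)\ne 0$ before completing squares.) With this repair, and noting that the paper outsources exactly this step to \cite{VAV}, your proof is complete.
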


\begin{proof}
Proposition~4.2 in \cite{VAV} and Proposition~\ref{pro:Brauergrouporder4} yield that after a linear change of variables the matrices of $Q_{T_i}$ have the following form
$$
M_{T_0}=
\begin{pmatrix}
a_0 & b_0 & 0 & 0 & 0\\
b_0 & c_0 & 0 & 0 & 0\\
0 & 0 & m_0 & 0 & 0\\
0 & 0 & 0 & n_0 & 0\\
0 & 0 & 0 & 0 & 0
\end{pmatrix},\ 
M_{T_1}=
\begin{pmatrix}
a_1 & b_1 & 0 & 0 & 0\\
b_1 & c_1 & 0 & 0 & 0\\
0 & 0 & m_1 & 0 & 0\\
0 & 0 & 0 & 0 & 0\\
0 & 0 & 0 & 0 & k_1
\end{pmatrix}
\ 
\text{ and }
\ 
M_{T_2}=
\begin{pmatrix}
a_2 & b_2 & 0 & 0 & 0\\
b_2 & c_2 & 0 & 0 & 0\\
0 & 0 & 0 & 0 & 0\\
0 & 0 & 0 & n_2 & 0\\
0 & 0 & 0 & 0 & k_2
\end{pmatrix}.
$$
We get that
$$
\ep_{T_0}=-d_0m_0 n_0,\quad\ep_{T_1}=-d_1m_1 k_1 \quad \text{ and } \quad \ep_{T_2}=-d_2n_2k_2,
$$
where $d_i:=b_i^2-a_ic_i$. Since the $M_{T_i}$ are linearly dependent we can assume $M_{T_2}=M_{T_1}-M_{T_0}$. Without loss of generality we can set $m_0=m_1$ equal to $\ep$. We can still change $n_0$ and $k_1$ independently up to squares, and the condition $\ep\cdot\ep_{T_i}\in\mathbb{Q}^{\times,2}$ allows us to take $n_0=-d_0$ and $k_1=-d_1$. The condition $\ep d_0d_1d_2\in\mathbb{Q}^{\times,2}$ is then automatically satisfied. The last condition is equivalent to the projective scheme defined by \eqref{gen_sys} is smooth.

From Proposition~\ref{pro:Brauergrouporder4} it is easy to check that any such system defines a surface with a Brauer group of order $4$.
\end{proof}

The algorithm in \cite{VAV} even allows us to represent the elements of the Brauer group by explicit quaternion algebras. Given the explicit element of the Brauer group we can now discuss whether the Brauer--Manin obstruction to Hasse principle is the only one. For simplicity we restrict to surfaces over the rational numbers, but the results and proofs in this and the last section equally apply to quartic del Pezzo surfaces with a Brauer group of order $4$ over any number fields. 

For quartic del Pezzo surfaces with a large Brauer group which admit conic fibrations there cannot be a Brauer--Manin obstruction to the Hasse principle.

\begin{thm}\label{thm:conicfibrations}
Let $X/\mathbb Q$ be a quartic del Pezzo surface with $\# \Br X/\Br_0 X =4$. If $X$ admits a conic fibration $X \dashrightarrow \mathbb P^1$ then $X(\mathbb Q)\neq \emptyset$.
\end{thm}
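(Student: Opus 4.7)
The plan is to show that the conic fibration forces a $G_\mathbb Q$-invariant $(-1)$-curve class on $\bar X$, which descends to a $\mathbb Q$-rational line on $X$ and thus supplies a rational point.

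A conic fibration $f\colon X\dashrightarrow\mathbb P^1$ is encoded by a $G_\mathbb Q$-invariant class $c\in\Pic\bar X$ with $c^2=0$ and $-K_X\cdot c=2$. On $\bar X\cong\mathrm{Bl}_5\mathbb P^2$ there are exactly ten such classes, the five $H-E_i$ and the five $2H-\sum_{j\ne i}E_j$, paired by $c\mapsto -K_X-c$. Consequently the Galois image $G$ lies in the stabiliser of $c$ in the Weyl group $W(D_5)$, which is a copy of $W(D_4)$.

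The Brauer hypothesis is used to cut $G$ down further. Via the identification $\Br X/\Br_0 X\cong H^1(G_\mathbb Q,\Pic\bar X)$, the condition $\#\Br X/\Br_0 X=4$ forces $H^1(G,\Pic\bar X)=(\mathbb Z/2)^2$. Combined with $G\subseteq W(D_4)$ this leaves a short list of possibilities for $G$, in the spirit of the classifications underpinning \cite{BBFL,VAV}. Under $W(D_4)$ the sixteen $(-1)$-curve classes on $\bar X$ split into two $8$-element orbits, the sections of $f$ (classes with $e\cdot c=1$) and the fibre components (classes with $e\cdot c=0$, $e\neq c$). For each admissible $G$ I would verify directly that $G$ fixes at least one $(-1)$-curve class---this is the main technical point, since \emph{a priori} a subgroup of $W(D_4)$ can act without fixed points on an $8$-element orbit.

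Once such an invariant class $[E]$ is in hand, the rigidity $h^0(X,\mathcal O_X(E))=1$ (Riemann--Roch together with ampleness of $-K_X$) implies that the unique effective representative is Galois-invariant as a subscheme and descends to a $\mathbb Q$-rational smooth rational curve $L\subseteq X$. In the anticanonical embedding $L$ has degree $-K_X\cdot E=1$, so $L$ is a line in $\mathbb P^4_\mathbb Q$ and therefore isomorphic to $\mathbb P^1_\mathbb Q$; this yields $\emptyset\ne L(\mathbb Q)\subseteq X(\mathbb Q)$. The principal obstacle is the case analysis in the middle step: enumerating the admissible Galois images $G\subseteq W(D_4)$ compatible with the Brauer hypothesis and exhibiting a $G$-fixed $(-1)$-curve class in each one, especially in situations where only a fibre component (rather than a section of $f$) is fixed.
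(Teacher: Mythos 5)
There is a genuine gap, and it is precisely at the step you flag as ``the main technical point'': no admissible Galois image $G$ fixes a $(-1)$-curve class, so the invariant class $[E]$ you need never exists. One way to see this is intrinsic: if $[E]$ were $G_\Q$-fixed, then (by the rigidity argument you give) the line $E$ would be defined over $\Q$ and could be contracted over $\Q$, exhibiting $X$ as the blow-up of a del Pezzo surface $X'$ of degree $5$ in a rational point. Since $E^2=-1$, the lattice splits $G_\Q$-equivariantly as $\Pic\bar X\cong\Z[E]\oplus[E]^\perp$ with $[E]^\perp\cong\Pic\bar X'$, so $\H^1(G_\Q,\Pic\bar X)\cong\H^1(G_\Q,\Pic\bar X')$, and the latter vanishes for every degree~$5$ del Pezzo surface. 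This contradicts $\#\Br X/\Br_0 X=4$. Concretely, in the presence of the conic class $c$ the invariant sublattice of $\Pic\bar X$ has rank $2$ and is spanned rationally by $-K_X$ and $c$; writing a putative invariant $(-1)$-class as $e=\alpha(-K_X)+\beta c$, the conditions $e\cdot(-K_X)=1$ and $e^2=-1$ force $4\alpha^2-2\alpha-1=0$, which has no rational solution. So your case analysis over subgroups of $W(D_4)$ would terminate with the answer ``no fixed line in any case,'' and the argument cannot be completed as designed.

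The paper's proof goes through the same classification (subgroups $H\subseteq W_5$ with $\H^1(H,\Pic\bar X)\cong(\Z/2)^2$: there are four up to conjugacy, three with invariant rank $1$, which excludes a conic fibration, and one with invariant rank $2$) but then extracts the rational point differently: in the rank-$2$ case the Galois group \emph{swaps} two lines $E_1,E_2$ with $E_1\cdot E_2=1$ (their sum being the conic class $c$), and it is the intersection point $E_1\cap E_2$, a Galois-stable closed point of degree $1$, that is rational --- not a line. If you replace your final step by this observation, and in the enumeration check instead that the relevant $G$ has an orbit $\{E_1,E_2\}$ of intersecting lines, your outline becomes the paper's proof.
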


\begin{proof}
Since $X$ is a nice variety over a number field we conclude from the Hochschild--Serre spectral sequence \cite[Cor.~6.7.8]{Poonen} that $\Br X/\Br_0 X \cong \H^1(G_{\mathbb Q}, \Pic \bar X)$. We will need to understand the Galois action on the geometric Picard group.

For a general quartic del Pezzo surface the action of the absolute Galois group $G_{\mathbb Q}$ on $\Pic \bar X$ factors through a subgroup $H \subseteq W_5$. Here $W_5$ is the finite group Weyl group which acts on $\Pic \bar X$, see \cite[\textsection 1.5]{arithmeticdps}. Since $\Pic \bar X$ is torsion-free we conclude from the inflation-restriction sequence that $\H^1(G_{\mathbb Q}, \Pic \bar X) \cong \H^1(H, \Pic \bar X)$. For each of the 196 different subgroups of $W_5$ (up to conjugation) we can compute the first cohomology group directly and we find that only four subgroups yield a Brauer group of order $4$.

Again using the Hochschild--Serre spectral sequence we see that $\Pic X \hookrightarrow (\Pic \bar X)^{G_{\mathbb Q}}$ is injective. For the four relevant Galois actions on $\Pic \bar X$ we see that $(\Pic \bar X)^{G_{\mathbb Q}}$ has rank $1$ in three cases, and rank $2$ in one case.

In the case of rank $1$ we conclude that $\Pic X$ is free of rank $1$, since it contains the canonical class. If $X$ would admit a conic fibration then the class of the fibre should be a (rational) multiple of the canonical class, and have self-intersection $0$. Clearly a contradiction.

In the case of rank $2$ we find that the action of Galois permutes two exceptional curves, whose intersection pairing equals $1$. Hence their intersection point is defined over the base field.
\end{proof}

\begin{rema} More precisely, in the latter case there are multiple pairs of such intersecting lines which sum to the same class in $\Pic X$. One can conclude from this that among the quartic del Pezzo surfaces with a Brauer group of order $4$ the following statements are equivalent.
\begin{enumerate}
\item $\rk \Pic X =2$.
\item $X$ contains two conjugate intersecting lines.
\item $X$ admits a conic fibration.
\end{enumerate}
\end{rema}

For general maps $f\colon X\dashrightarrow \mathbb{P}^1$ the following notion will be very useful. We say that $\Br X$ is \textit{vertical} with respect to $f$ if $\Br X\subseteq f^*(\Br \kappa(\mathbb{P}^1))$.

For quartic del Pezzo surfaces with vertical Brauer groups there is a method to prove that $X(\A_\Q)^{\Br}\neq\emptyset$ 
implies $X(\Q)\neq\emptyset$ (see \cite{WittenbergBook} and \cite{CTSSD}). Várilly-Alvarado and Viray proved in \cite{VAV} that the Brauer group is vertical for certain quartic del Pezzo surfaces, and they concluded that the Brauer--Manin obstruction to Hasse principle is the only one for these surface of ``BSD-type''.

Next they considered quartic del Pezzo surface with a Brauer group of order $4$. First they noted that if $X(\Q)\neq \emptyset$, then $\Br X$ is vertical with respect to a specific map. Next they asked if $X(\Q)= \emptyset$ whether the Brauer group can be vertical, in particular for rational maps $f \colon X \dashrightarrow \mathbb P^1$ obtained from projecting away from a plane. They suggested the following approach.

Let $T_i$ be the three degree 1 points of $\mathscr{S}$, see Proposition~\ref{pro:Brauergrouporder4}. For $P_i\in V(Q_{T_i})(\Q)$, denote $M(P_0,P_1,P_2)$ the matrix whose $(i,j)$th entry is $\frac{\partial Q_{T_i}}{\partial x_j}(P_i)$. Then take a hyperplane $H\subseteq \mathbb{P}^4$ which does not pass through the vertices of $V(Q_{T_i})$, and define
$$
Y^{(X)}=\{(P_0,P_1,P_2)\in H^3\mid \rnk M(P_0,P_1,P_2)\le 2 \text{ and } Q_{T_i}(P_i)=0\}.
$$
There is an inclusion $X(\Q)\hookrightarrow Y^{(X)}(\Q)$, which is described in \cite{VAV}.

If we can find a rational point on $Y^{(X)}$, then we obtain the verticality of $\Br X/\Br\Q$ with respect to a certain map $f$. We already know that if $X(\Q)\neq \emptyset$, then $Y^{(X)}(\Q)\neq\emptyset$. The question is, whether there is a point in $Y^{(X)}(\Q)$ which does not come from a point in $X(\Q)$ (see \cite{VAV}, Question~6.3). Here we show that the answer is no.

\begin{defi}
We define a point $P=(u \colon v \colon x \colon y \colon z) \in \mathbb P^4(\Q)$ to be equivalent to the points $(u \colon v \colon \pm x \colon \pm y \colon \pm z)$. This induces an equivalence relation $\sim$ on $Y^{(X)}(\Q)$.
\end{defi}

In other words, two points $\mathcal P = (P_0,P_1,P_2)$ and $\mathcal P' = (P'_0,P'_1,P'_2)$ are equivalent precisely if the $P_i$ and $P'_i$ are the same projective point up to a possible change of sign in the last three coordinates.


Consider a point $\mathcal P \in Y^{(X)}(\Q)$. Although $\mathcal P$ need not lie the image of the map $X(\Q)\hookrightarrow Y^{(X)}(\Q)$, we now show that it is equivalent to a point that does.

\begin{thm}\label{thm:nonewpoints}
The composite map
$$
X(\Q)\hookrightarrow Y^{(X)}(\Q)\longrightarrow Y^{(X)}(\Q)/\sim
$$
is surjective.
\end{thm}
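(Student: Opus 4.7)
The plan is to start from a representative $(P_0, P_1, P_2) \in Y^{(X)}(\Q)$ of a class in $Y^{(X)}(\Q)/\sim$ and produce a rational point $P \in X(\Q)$ whose image in $Y^{(X)}$ recovers this triple, after a suitable rescaling of the $P_i$ in $\mathbb{P}^4(\Q)$.

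First, the rank $\le 2$ condition on $M(P_0, P_1, P_2)$ gives a non-trivial rational relation $\alpha_0 R_0 + \alpha_1 R_1 + \alpha_2 R_2 = 0$ among its rows. Because the normalisation in Proposition~\ref{pro:gen_sys} ensures $M_{T_2} = M_{T_1} - M_{T_0}$, the combination $(1,-1,1)$ is the one realised on the diagonal locus $\{(P,P,P) : P \in X\}$. Rescaling each $P_i$ in $\mathbb{P}^4(\Q)$, we reduce to $\alpha = (1,-1,1)$. The $x$-, $y$-, and $z$-columns of $\sum_i \alpha_i R_i = 0$ then immediately give
\[
x_0 = x_1 =: x,\qquad y_0 = y_2 =: y,\qquad z_1 = z_2 =: z,
\]
while the $u$- and $v$-columns yield the linear relation $M_A(\mathbf{u}_0 - \mathbf{u}_2) = M_B(\mathbf{u}_1 - \mathbf{u}_2)$, where $M_A$, $M_B$ are the symmetric matrices of the binary forms $A := a_0 u^2 + 2 b_0 uv + c_0 v^2$ and $B := a_1 u^2 + 2 b_1 uv + c_1 v^2$.

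The crux is to deduce $\mathbf{u}_0 = \mathbf{u}_1 = \mathbf{u}_2$. Setting $\Delta_A := \mathbf{u}_0 - \mathbf{u}_2$ and $\Delta_B := \mathbf{u}_1 - \mathbf{u}_2$, a linear combination of the identities $Q_{T_i}(P_i) = 0$ gives $A(\mathbf{u}_0) - A(\mathbf{u}_2) = B(\mathbf{u}_1) - B(\mathbf{u}_2)$, which simplifies via the linear relation above to $A(\Delta_A) = B(\Delta_B)$. Setting $\mathbf{w} := M_A \Delta_A = M_B \Delta_B$ turns this into the binary-form equation $\mathbf{w}^\top (M_A^{-1} - M_B^{-1}) \mathbf{w} = 0$. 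A direct calculation with $d_i = b_i^2 - a_i c_i$ gives $\det(M_A^{-1} - M_B^{-1}) = -d_2/(d_0 d_1)$, so this form is isotropic over $\Q$ if and only if $d_0 d_1 d_2 \in \Q^{\times, 2}$. But conditions~(1) and~(3) of Proposition~\ref{pro:gen_sys} jointly forbid this ($\ep d_0 d_1 d_2$ is a square while $\ep$ is not), so $\mathbf{w} = 0$ and hence $\Delta_A = \Delta_B = 0$.

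Setting $(u, v) := \mathbf{u}_0 = \mathbf{u}_1 = \mathbf{u}_2$, the point $P := (u : v : x : y : z)$ satisfies the defining equations of $X$ directly from $Q_{T_0}(P_0) = 0$ and $Q_{T_1}(P_1) = 0$, hence $P \in X(\Q)$. A final verification shows that projecting $P$ from the vertex of each cone $V(Q_{T_i})$ onto $H$ returns exactly $P_i$, since corresponding points share the four relevant coordinates and both lie on $H$. The main technical obstacle is the discriminant computation above, where both arithmetic hypotheses on $\ep$ enter decisively; mild degenerate configurations (some $\alpha_i$ vanishing, or the rescaling to $(1,-1,1)$ being obstructed) would require a separate brief treatment.
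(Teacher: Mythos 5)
Your proposal is correct and follows essentially the same route as the paper: normalise the row relation to $(1,-1,1)$, read off $x_0=x_1$, $y_0=y_2$, $z_1=z_2$, and reduce the identity $Q_{T_0}(P_0)-Q_{T_1}(P_1)+Q_{T_2}(P_2)=0$ to a binary quadratic form that is anisotropic because $d_0d_1d_2\notin\Q^{\times,2}$ (a consequence of conditions (1) and (3) of Proposition~\ref{pro:gen_sys}). The only differences are cosmetic: you phrase the discriminant computation invariantly via $\det(M_A^{-1}-M_B^{-1})=-d_2/(d_0d_1)$ instead of substituting \eqref{subst} into \eqref{quadr} explicitly, and you (rightly) flag the degenerate case of a vanishing coefficient in the row relation, which the paper also passes over.
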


\begin{proof}
We will represent $X$ by a system of equations as in \eqref{gen_sys}. In addition, we can assume without loss of generality $a_0 \ne 0$ and hence after completing the square that $b_0=0$. This implies that $d_0 = -a_0c_0$ and we conclude that also $c_0 \ne 0$.

Consider the point $(P_0,P_1,P_2)\in Y^{(X)}(\Q)$ where $P_i=(u_i:v_i:x_i:y_i:z_i)$. Our goal is to prove that $(P_1,P_2,P_3)\sim (P,P,P)$ for some $P\in X(\Q)$. By the definition of $Y^{(X)}$ using $T_0=(1:0)$, $T_1=(0:1)$ and $T_2=(-1:1)$ as in the proof of Proposition~\ref{pro:gen_sys}, we have
$$
\mathrm{rank\,}
\begin{pmatrix}
2a_0u_0                & 2c_0v_0                & 2\ep x_0  & -2d_0y_0  & 0    \\
2a_1u_1+2b_1v_1        & 2b_1u_1+2c_1v_1        & 2\ep x_1  & 0      & -2d_1z_1\\
2(a_1-a_0)u_2+2b_1v_2  & 2b_1u_2+2(c_1-c_0)v_2  & 0            & 2d_0y_2   & -2d_1z_2
\end{pmatrix}
=2.
$$
Write $\ell_i \in\Q^5$ for the rows of the above matrix. The condition on the rank implies the existence of a non-trivial relation $\kappa\ell_0+\lambda \ell_1+\mu \ell_2=0$. After multiplying the coordinates of $P_0$, $P_1$ and $P_2$ by respectively $\kappa$, $-\lambda$ and $\mu$, we obtain $\ell_0-\ell_1+\ell_2=0$. This implies 
\begin{equation}\label{xyz}
x_0=x_1,\ y_0=y_2\ \text{ and }\ z_1=z_2,
\end{equation} 
and the first two columns give us the relations
\begin{equation}\label{subst}
u_0=\frac{1}{a_0}\Bigl(a_1u_1+b_1v_1-(a_1-a_0)u_2-b_1v_2\Bigr)
\quad \text{ and } \quad
v_0=\frac{1}{c_0}\Bigl(b_1u_1+c_1v_1-b_1u_2-(c_1-c_0)v_2\Bigr).
%
\end{equation}
Condition $Q_{T_i}(P_i)=0$ gives us
\begin{multline}\label{quadr}
(a_0u_0^2+c_0v_0^2)-(a_1u_1^2+2b_1u_1v_1+c_1v_1^2)+((a_1-a_0)u_2^2+2b_1u_2v_2+(c_1-c_0)v_2^2)=\\
(d_0y_0^2-\ep x_0^2)-(d_1z_1^2-\ep x_1^2)+(d_1z_2^2-d_0y_2^2)=0.
\end{multline}
Substituting \eqref{subst} into \eqref{quadr}, we obtain the following:
\begin{multline}\label{quad_form}
(a_0^2c_0+b_0^2a_0-a_1a_0c_0)(u_1-u_2)^2+2(a_1b_1c_0+b_1c_1a_0-b_1a_0c_0)(u_1-u_2)(v_1-v_2)+\\
(b_1^2c_0+c_1^2a_0-c_1a_0c_0)(v_1-v_2)^2=0.
\end{multline}
This is a quadratic form in $u_1-u_2$ and $v_1-v_2$ whose determinant equals 
$$
-a_0c_0(b_1^2-a_1c_1)(b_1^2-(a_1-a_0)(c_1-c_0))=d_0d_1d_2.
$$ 
From $\ep\notin\Q^{\times,2}$ and $\ep d_0d_1d_2\in\Q^{\times,2}$ we conclude $d_0d_1d_2\notin\Q^{\times,2}$, so 
\eqref{quad_form} implies $u_1-u_2=0=v_1-v_2$. Then \eqref{subst} implies that $u_0=u_1=u_2$ and $v_0=v_1=v_2$.

Finally, conditions \eqref{xyz} and $Q_{T_i}(P_i)=0$ give us $x_0=x_1=\pm x_2$, $y_0=y_2=\pm y_1$ and $z_1=z_2=\pm z_0$, 
which means that $P_i \in X(\Q)$ and $P_0 \sim P_1 \sim P_2$ which proves the proposition.
\end{proof}

We deduce that the answer to Question~6.3 in \cite{VAV} is no. We conclude that one cannot directly apply to machinery of \cite{CTSSD} and \cite{WittenbergBook} to prove there are rational points on quartic del Pezzo surfaces with a Brauer group of order $4$.

\section{A subfamily}

We will study the arithmetic of quartic del Pezzo surfaces with a Brauer group of order $4$. In \cite{MS} such surfaces appeared, but those all had a rational point for obvious geometrical reasons; on these surfaces there is a pair of intersecting lines which, as a pair, are fixed by the Galois action. We will restrict to a subfamily for which such behaviour does occur.

\begin{defi}
Let $\mathcal X$ be the set of isomorphism classes of quartic del Pezzo surfaces for which the two quadratic forms $Q_i=a_iu^2+b_iuv+c_iv^2$ split over $\mathbb Q$, and $\ep$ is an odd prime number $p$.
\end{defi}

\begin{pro}\label{pro:sys_subfam}
Any $X \in \mathcal X$ can be written as
\begin{equation}\label{sys}
\begin{cases}
y^2-px^2=Muv;\\
z^2-px^2=(Au+Bv)(Cu+Dv),
\end{cases}
\end{equation}
for $A,B,C,D,M,N\in\Z$ which satisfy
\begin{enumerate}
\item[\textbf{(C1)}] $(AD+BC-M)^2-4ABCD=pN^2$, and
\item[\textbf{(C2)}] $ NM(AD-BC)\neq 0$.
\end{enumerate}
\end{pro}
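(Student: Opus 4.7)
The plan is to start from the canonical form of Proposition~\ref{pro:gen_sys} and use the splitting hypothesis to reduce it step by step to the desired shape. Since $X \in \mathcal{X}$, Proposition~\ref{pro:gen_sys} gives us such a system with $\ep = p$ an odd prime and each $Q_i = a_i u^2 + 2 b_i u v + c_i v^2$ a product of two $\mathbb{Q}$-linear forms. The splitting forces the discriminants $d_i = b_i^2 - a_i c_i$ to be rational squares for $i = 0, 1$; combined with $p d_0 d_1 d_2 \in \mathbb{Q}^{\times, 2}$ and $p \notin \mathbb{Q}^{\times, 2}$, this forces $d_0, d_1, d_2$ to be nonzero, with $d_2$ moreover a non-square.

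First I would apply the $\mathbb{Q}$-linear change of $(u, v)$-coordinates sending $Q_0 = L L'$ to $uv$; this is invertible precisely because $d_0 \ne 0$, and it carries $Q_1$ to another product of two $\mathbb{Q}$-linear forms, say $(A' u + B' v)(C' u + D' v)$. Writing $d_0 = e_0^2$ and $d_1 = e_1^2$, the substitutions $y \mapsto y/e_0$ and $z \mapsto z/e_1$ then produce
\[
\begin{cases}
y^2 - p x^2 = uv;\\
z^2 - p x^2 = (A' u + B' v)(C' u + D' v),
\end{cases}
\]
with $A', B', C', D' \in \mathbb{Q}$. To reach integer coefficients while keeping the left hand sides in this shape, I would then rescale $u \mapsto \alpha u$ and $v \mapsto \delta v$ with $\alpha, \delta \in \mathbb{Z}_{>0}$ chosen so that $\alpha$ clears the denominators of $A', C'$ and $\delta$ those of $B', D'$; this replaces the first right hand side by $(\alpha \delta) \cdot uv$ and the second by $(\alpha A' u + \delta B' v)(\alpha C' u + \delta D' v)$. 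Setting $M = \alpha \delta$, $A = \alpha A'$, $B = \delta B'$, $C = \alpha C'$, $D = \delta D'$ gives the desired integer coefficients.

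To verify (C1) and (C2), a direct computation in the final coordinates gives $d_0 = M^2/4$, $d_1 = (AD - BC)^2/4$, and $d_2 = ((AD + BC - M)^2 - 4 ABCD)/4$, so condition (3) of Proposition~\ref{pro:gen_sys} rearranges to $p \cdot ((AD + BC - M)^2 - 4 ABCD) \in \mathbb{Q}^{\times, 2}$, i.e.\ the existence of a rational $N$ with $(AD + BC - M)^2 - 4 ABCD = p N^2$. Condition (C2) is exactly the non-vanishing of $d_0, d_1, d_2$, which I have already established.

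The step that needs the primality of $p$ is the integrality of $N$. Since the left hand side of (C1) lies in $\mathbb{Z}$ and $p$ is prime, writing $N = a/b$ in lowest terms gives $b^2 \mid p a^2$, hence $b^2 \mid p$, which forces $b = 1$; without primality one would only obtain $N \in \mathbb{Q}$. The rest of the argument is essentially bookkeeping for the successive linear changes of variables.
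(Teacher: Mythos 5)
Your argument is correct and follows the same route as the paper, which simply declares the proposition an immediate consequence of Proposition~\ref{pro:gen_sys}; you have supplied the details that the paper leaves implicit. In particular, your normalization of $Q_0$ to $uv$ via the splitting and $d_0\ne 0$, the computation $d_0=M^2/4$, $d_1=(AD-BC)^2/4$, $d_2=\bigl((AD+BC-M)^2-4ABCD\bigr)/4$, and the observation that primality of $p$ forces $N\in\Z$ are all accurate.
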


Condition (C1) is a restatement of $\ep d_0d_1d_2\in\Q^{\times,2}$ and (C2) is equivalent to $X$ being smooth. 

\begin{proof}
It is an immediate consequence of Proposition \ref{pro:gen_sys}.
\end{proof}

To study the members of this family which fail the Hasse principle we will first consider the question of local solubility. For places $v$ such that $p\in\Q_v^{\times,2}$ we have $(0\colon 0\colon 1\colon \sqrt{p}\colon \sqrt{p})\in X(\Q_v)$, so $X(\Q_v)\neq\emptyset$. The following proposition covers all but two of the other places.


\begin{pro}\label{pro:loc_sol_subfam}
Consider $X=X_{p,A,B,C,D,M,N}$. Let $v\not \in \{2,p\}$ be a place such that $p\notin\Q_v^{\times,2}$ and $v\nmid N$. Then $X(\Q_v)
\neq\emptyset$.
\end{pro}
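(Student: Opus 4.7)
The plan is to find a smooth $\F_v$-point on the reduction $X_{\F_v}$ and lift it to a $\Q_v$-point via Hensel's lemma. The first step exploits (C1): reading it modulo $v$ yields
\[
(AD+BC-M)^2 - 4ABCD \equiv pN^2 \pmod{v},
\]
whose right-hand side is a nonzero non-square in $\F_v$, by the hypotheses $v\neq p$, $v\nmid N$, and $p\notin\Q_v^{\times,2}$. I would show that this forces $v\nmid ABCDM$: if for instance $v\mid M$, the left-hand side reduces to the square $(AD-BC)^2$ mod $v$, which cannot equal a nonzero non-square (and cannot be zero since that would force $v\mid N$); analogous arguments rule out $v\mid A,B,C,D$. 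As a byproduct, the binary form $Q_0-Q_1 = Muv-(Au+Bv)(Cu+Dv)$ has discriminant $pN^2$ mod $v$, a non-square unit, so is anisotropic over $\F_v$: we have $Q_0(\bar u,\bar v)\neq Q_1(\bar u,\bar v)$ for every nonzero $(\bar u,\bar v)\in\F_v^2$.

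Next I would analyze the projection $\pi\colon X\dashrightarrow\mathbb{P}^1$, $(u:v:x:y:z)\mapsto(u:v)$. For a point $(\bar u_0:\bar v_0)\in\mathbb{P}^1(\F_v)$ with $Q_0(\bar u_0,\bar v_0)Q_1(\bar u_0,\bar v_0)\neq 0$, the fiber is a complete intersection of two quadrics in $\mathbb{P}^3_{\F_v}$ whose defining pencil has four distinct singular members (using $Q_0\neq Q_1$ at the base point from the previous paragraph), hence a smooth projective genus one curve; moreover, a short Jacobian calculation shows that every $\F_v$-point of such a fiber is smooth on $X_{\F_v}$ (the Jacobian can only drop rank at a point where $Q_0(\bar u_0,\bar v_0)=Q_1(\bar u_0,\bar v_0)$, ruled out by anisotropy). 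The ``bad'' base points, where $Q_0 Q_1$ vanishes, form a subset of $\{(1:0),(0:1),(-B:A),(-D:C)\}$ of size at most four. For $v\geq 5$, $|\mathbb{P}^1(\F_v)|=v+1>4$, so a smooth fiber $C$ exists; the Hasse--Weil bound $\#C(\F_v)\geq v+1-2\sqrt v\geq 1$ produces an $\F_v$-point, which Hensel's lemma lifts.

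The main obstacle is $v=3$, where the bad locus may fill all of $\mathbb{P}^1(\F_3)$. This happens only in the subcase $v\nmid AD-BC$. Here I would observe that in $\F_3^\times=\{1,2\}$ two elements lie in the same square class if and only if they are equal, so $AD\not\equiv BC\pmod{3}$ forces $\chi(AD)\neq\chi(BC)$, hence $\chi(ABCD)=-1$, and consequently at least one of $AC$ or $BD$ is a nonzero square in $\F_3$. The corresponding point $(1:0:0:0:\sqrt{AC})$ or $(0:1:0:0:\sqrt{BD})$ then lies on $X_{\F_3}$ on the degenerate fiber above $(1:0)$ or $(0:1)$, and a direct Jacobian check using $v\nmid M$ verifies that it is a smooth point of $X_{\F_3}$; Hensel's lemma again lifts it. In the remaining subcase $v\mid AD-BC$, the two points $(-B:A)$ and $(-D:C)$ coincide modulo $v$, so the bad locus has only three elements in $\mathbb{P}^1(\F_3)$, leaving a smooth fiber to which the Hasse--Weil argument of the second paragraph applies.
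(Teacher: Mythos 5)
Your proof is correct, but it takes a genuinely different route from the paper's. The paper invokes the Chevalley--Warning theorem to get a non-trivial solution over $\F_q$ and then runs a four-case analysis on a lift; the decisive case is $q\mid y_0,z_0$, where subtracting the two equations and completing the square against (C1) contradicts $p\notin\F_q^{\times,2}$ and $q\nmid N$. That step is precisely your observation that $Q_0-Q_1$ has discriminant $pN^2$ and is anisotropic over $\F_q$, so both arguments rest on the same use of (C1); you package it geometrically. What your route buys: it isolates the clean facts that $q\nmid ABCDM$ and that the fibres of $(u:v:x:y:z)\mapsto(u:v)$ over base points with $Q_0Q_1\neq 0$ are smooth genus-one curves, so in passing you exhibit good reduction at these primes. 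What it costs is the Hasse--Weil bound as an input and a separate, somewhat fiddly, treatment of $q=3$, both of which the paper's elementary counting avoids. Two small points. First, the parenthetical claim that the Jacobian of the pair of equations can only drop rank over a base point with $Q_0(\bar u_0,\bar v_0)=Q_1(\bar u_0,\bar v_0)$ is not quite right as a statement about all of $X_{\F_q}$: when $q\mid AD-BC$ it can also degenerate at points $(u:v:0:y:0)$ lying over the common root of $Q_1$, where $Q_1=0\neq Q_0$. This does not affect your argument, since you only use the claim on fibres with $Q_0Q_1\neq 0$, where the case analysis (using $q\nmid pM$ and anisotropy) does give full rank at every point; alternatively, smoothness of $X_{\F_q}$ at an $\F_q$-point of a smooth one-dimensional linear section follows from a one-line tangent-space computation. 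Second, you should note, as the paper does, that $v=\infty$ is excluded by the hypothesis $p\notin\Q_v^{\times,2}$ before reducing modulo $v$.
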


\begin{proof}
Obviously $v\neq \infty$, so it corresponds to a prime number $q$. The Chevalley--Warning theorem yields that the system \eqref{sys} has a non-trivial solution $(u,v,x,y,z)\in\mathbb{F}_q^5$. Let $(u_0,v_0,x_0,y_0,z_0)$ be an arbitrary lift to $\Z_q^5$. We distinguish four cases.

\noindent\textbf{Case 1.} $q \nmid y_0,z_0$. In this case take 
$$
y_1=\sqrt{Mu_0v_0+px_0^2}\in\Z_q,\quad z_1=\sqrt{(Au_0+Bv_0)(Cu_0+Dv_0)+px_0^2}\in\Z_q,
$$ 
then $(u_0\colon v_0\colon x_0\colon y_1\colon z_1)\in X(\Q_q)$.

\noindent\textbf{Case 2.} $q \mid y_0$ and $q \nmid x_0,z_0$. Take 
$$
x_1=\sqrt{-\tfrac{M}{p}u_0v_0}\in\Z_q,\quad z_1=\sqrt{(Au_0+Bv_0)(Cu_0+Dv_0)+px_1^2}\in\Z_q,
$$ 
then $(u_0\colon v_0\colon x_1\colon 0\colon z_1)\in X(\Q_q)$.

\noindent\textbf{Case 3.} $q \mid x_0, y_0$ and $q \nmid z_0$. We deduce that $q \mid u_0v_0$. If $q \mid u_0$, take $(0\colon v_0\colon 0\colon 0\colon \sqrt{BDv_0^2})\in X(\Q_q)$. If on the other hand $q \mid v_0$, take $(u_0\colon 0\colon 0\colon 0\colon \sqrt{ACu_0^2})\in X(\Q_q)$.

\noindent\textbf{Case 4.} $q \mid y_0, z_0$. Subtracting the two equations in \eqref{sys} yields
$$
ACu^2+(AD+BC-M)uv+BDv^2\equiv 0\mod q,
$$
which can be rewritten to
$$
(2ACu+(AD+BC-M)v)^2 \equiv (AD+BC-M)^2v^2-4ABCDv^2 \equiv pN^2v^2 \mod q,
$$
and equivalently
$$
(2BDv+(AD+BC-M)u)^2 \equiv pN^2u^2 \mod q.
$$
From $\left(\frac{p}{q}\right)=-1$ and $q\nmid N$ we obtain $u\equiv v\equiv 0\mod q$, and \eqref{sys} gives us $x\equiv 0\mod q$. Contradicting the non-triviality of $(u,v,x,y,z)\in\F_q^5$.
\end{proof}

Most of our examples will satisfy $N=1$ so $X$ will be locally soluble everywhere away from $2$ and $p$.

The algorithm in \cite[\textsection 4.1]{VAV} allows to write down the explicit elements of $\Br X/\Br \Q$. Choosing $P_{T_0}=(0\colon 1\colon 0\colon 0\colon 0)\in Q_{T_0}(\Q)$, $P_{T_1}=(-B\colon A\colon 0\colon 0\colon 0)\in Q_{T_1}(\Q)$ and $P_{T_2}=(0\colon 0\colon 0\colon 1\colon 1)\in Q_{T_2}(\Q)$, we obtain
\begin{equation}\label{Br_subfam}
\Br X/\Br \Q=\left\{\id, \left(p,\frac{u}{Au+Bv}\right), \left(p,\frac{z-y}{u}\right), \left(p,\frac{Au+Bv}{z-y}
\right)\right\}.
\end{equation}

\begin{defi}\label{defi:ABC}
For any $X\in\mathcal{X}$ given by \eqref{sys}, we will write $\cuA=\left(p,\frac{u}{Au+Bv}\right)$, $\cuB=\left(p,\frac{z-y}{u}\right)$ and $\cuC=\left(p,\frac{Au+Bv}{z-y}\right)$ and assume the dependency on $X$ to be understood.
\end{defi}

\begin{rema}
The above notation actually depends on the representation \eqref{sys} we chose for the equivalence class $X\in\mathcal{X}$. One can see that the change of variables $\tilde{u}=\frac{Au+Bv}{AD-BC}$, $\tilde{v}=\frac{Cu+Dv}{AD-BC}$, $\tilde{y}=z$, $\tilde{z}=y$ provides another representation of $X$ in the form \eqref{sys}, but with possibly different coefficients, and $\cuB$ for the new representation is exactly $\cuC$ for the old one. 
However, one can show that $\cuA$ does not depend on the representation of $X$.


\end{rema}

\begin{rema}
One can see that the classes $\cuA$ and $\cuB$ are also represented by
\begin{equation}\label{Br_gr_rewr}
\cuA=\left(p,\frac{Mv}{Au+Bv}\right) \quad \text{ and } \quad
\cuB=\left(p,\frac{AC(z+y)}{u}\right).
\end{equation}
\end{rema}

\section{Simultaneous obstructions to the Hasse principle}

We use the notation from the previous section. In particular, $X$ is a quartic del Pezzo surface given by explicit equations as in Proposition~\ref{pro:sys_subfam}. We then see from \eqref{Br_subfam} and Definition~\ref{defi:ABC} that
\[
\Br X/\Br \mathbb Q = \{1,\mathcal A, \mathcal B, \mathcal C\}.
\]
We will concern ourselves with the Brauer--Manin obstruction to weak approximation and the Hasse principle. The first results is on weak approximation.

\begin{thm}\label{thm:failureWA}
Any $X \in \mathcal X$ fails weak approximation.
\end{thm}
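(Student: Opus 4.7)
To prove that $X$ fails weak approximation, my plan is to exhibit a place $v$ and a class $\mathcal Q\in\{\mathcal A,\mathcal B,\mathcal C\}$ such that the local invariant $\inv_v\mathcal Q\colon X(\mathbb Q_v)\to\tfrac12\mathbb Z/\mathbb Z$ is surjective. Given such a pair, starting from any adelic point and modifying only its $v$-component toggles the global sum $\sum_w\inv_w\mathcal Q(P_w)$ by $\tfrac12$, producing an adelic point outside $X(\mathbb A_{\mathbb Q})^{\mathcal Q}$ and hence outside the closure of $X(\mathbb Q)\subseteq X(\mathbb A_{\mathbb Q})^{\Br}$.

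I would specialize to $v=p$ with $\mathcal Q=\mathcal A=(p,\,u/(Au+Bv))$. Since the function $f:=u/(Au+Bv)$ depends only on $(u:v)$, the invariant $\inv_p\mathcal A$ factors through the projection $\pi\colon X\to\mathbb P^1$, $(u:v:x:y:z)\mapsto(u:v)$. For a point $P\in X(\mathbb Q_p)$ at which both $u(P)$ and $Au(P)+Bv(P)$ are $p$-adic units, the prescription recalled in the Preliminaries identifies
\[
\inv_p\mathcal A(P)=(p,f(P))_p=\left(\tfrac{u(P)(Au(P)+Bv(P))}{p}\right)
\]
with a Legendre symbol. After rescaling to $v(P)=1$ and letting $u_0$ be the reduction of $u(P)$, non-constancy of $\inv_p\mathcal A$ amounts to finding two admissible residues $u_0,u_1\in\mathbb F_p^\times$ with $\left(\tfrac{u_0(Au_0+B)}{p}\right)\ne\left(\tfrac{u_1(Au_1+B)}{p}\right)$.

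The central step is to control the set of admissible $u_0$. A residue $u_0$ lifts to a $\mathbb Q_p$-point of $X$ in the fibre $\pi^{-1}(u_0:1)$ as soon as the two fibre conics $y^2-px^2=Mu_0$ and $z^2-px^2=(Au_0+B)(Cu_0+D)$ are each $\mathbb Q_p$-soluble, since then one can glue the solutions along a common $x$-coordinate by an argument analogous to Proposition~\ref{pro:loc_sol_subfam}. This reduces to $Mu_0$ and $(Au_0+B)(Cu_0+D)$ both being squares mod $p$, a constraint which by~(C1) — which forces $ABCD$ to be a square mod $p$ — is met by a substantial subset of $\mathbb F_p^\times$. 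Because $A$ and $B$ cannot both vanish mod $p$ without $\mathcal A$ collapsing into $\Br_0 X$, a counting/character-sum argument shows that $u_0\mapsto\left(\tfrac{Au_0+B}{p}\right)$ is not constant on the admissible subset, yielding the desired pair.

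The main obstacle I anticipate is uniformity: for small $p$ or very degenerate residue patterns of $A,B,C,D,M$ modulo $p$, the admissible set may be too small or lie in a single Legendre class. In that event the argument falls back on $\mathcal B=(p,(z-y)/u)$ or $\mathcal C=\mathcal A\mathcal B$: if $\inv_p$ were constant for all three non-trivial classes simultaneously, the whole group $\Br X/\Br_0 X\cong(\mathbb Z/2)^2$ would act trivially on $X(\mathbb Q_p)$, which a direct inspection of the generators via~(C1) rules out. As a further backup, one can switch from $p$ to a prime $q$ selected by Chebotarev with $\left(\tfrac{p}{q}\right)=-1$ and $\left(\tfrac{-MAB}{q}\right)=1$, at which the fibre of $\pi$ over $(-B:A)$ is $\mathbb Q_q$-soluble and forces $\inv_q\mathcal A=\tfrac12$ in contrast to the generic value~$0$ away from the support of $f$.
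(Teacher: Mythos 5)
Your top-level strategy is the same as the paper's: exhibit a place at which the local invariant map of one of $\cuA$, $\cuB$, $\cuC$ is surjective, which gives a Brauer--Manin obstruction to weak approximation. The problem is that this surjectivity \emph{is} the entire content of the theorem, and your proposal does not actually establish it. In the paper it is Proposition~\ref{prop:surjectiveinvariantmap}, whose proof is a lengthy normalization (reductions \eqref{gcd_cond_1}--\eqref{hilbert_cond}) followed by an eight-case analysis on $\v_p(M)$ and $\max_i\v_p(A),\v_p(B),\v_p(C),\v_p(D)$, in which sometimes $\cuB$ and sometimes $\cuA$ has surjective invariant at $p$. Your primary route --- surjectivity of $\inv_p\cuA$ via a character-sum count over admissible residues $u_0$ --- provably fails on a large part of $\mathcal X$: Proposition~\ref{pro:ex1_inv_A} shows that for every surface $Y_{p,a,b}$ the map $\inv_p\cuA$ is \emph{constant} on $Y_{p,a,b}(\Q_p)$ (and dually, for the $S_{p,a,b}$ it is $\inv_p\cuB$ that is constant). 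So the ``degenerate'' situation you flag as a possible obstacle is not an edge case; it occurs systematically, and your fallback --- that simultaneous constancy of $\inv_p$ on all three nontrivial classes is ``ruled out by direct inspection of the generators via (C1)'' --- is an unproven assertion that simply restates Proposition~\ref{prop:surjectiveinvariantmap}. (The parenthetical claim that $p\mid A$ and $p\mid B$ would force $\cuA$ into $\Br_0X$ is also false; the paper handles those divisibilities by explicit changes of variables in Cases 5--8.)

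The second fallback is not salvageable: a prime $q\neq 2,p$ with $q\nmid N$ chosen by Chebotarev is, for all but finitely many choices, a prime of good reduction for $X$, and by the results of \cite{Beffeval} and \cite{CTS13} --- invoked for exactly this purpose in the proof of Proposition~\ref{pro:ex1_inv_A} --- the evaluation map of \emph{any} class in $\Br X$ is constant on $X(\Q_q)$ at such a prime. Your claimed dichotomy ($\inv_q\cuA=\tfrac12$ on the fibre over $(-B:A)$ versus $0$ generically) contradicts this; the apparent jump comes from evaluating the symbol $(p,u/(Au+Bv))$ on its ramification divisor, where one must pass to another representative of the same class, after which the invariant is again the constant value. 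So no surjectivity can be extracted away from $p$ (and possibly $2$), and the proof must go through the $p$-adic case analysis that your proposal defers.
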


The second result is about the Hasse principle and seems related to the following fact by Colliot-Th\'el\`ene and Poonen \cite[Rem.~2 following Lem.~3.4]{CTPoonen} on quartic del Pezzo surfaces: suppose that $X(\mathbb A_\Q)^{\Br} = \emptyset$ then there exists an element $\mathcal Q \in \Br X$ such that
 $X(\mathbb A_\Q)^{\mathcal Q} = \emptyset$. However, neither result implies the other.

\begin{thm}\label{thm:curlyABorC}
Only one of $\mathcal A$, $\mathcal B$ and $\mathcal C$ can give an obstruction to the Hasse principle on $X \in \mathcal X$.
\end{thm}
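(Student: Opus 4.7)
My plan is to reduce the theorem to a single piece of local information supplied by the proof of Theorem~\ref{thm:failureWA}. The key soft observation is: if a class $\cuQ \in \Br X$ obstructs the Hasse principle on $X$, then at every place $v$ the local invariant $\inv_v\cuQ \colon X(\Q_v) \to \tfrac{1}{2}\Z/\Z$ must be constant. Indeed, local solubility (Proposition~\ref{pro:loc_sol_subfam} together with the cases $p \in \Q_v^{\times,2}$) produces an adelic point $(P_w)_w$; the obstruction forces $\sum_w \inv_w\cuQ(P_w) = \tfrac{1}{2}$, and replacing $P_v$ by an arbitrary other $P'_v \in X(\Q_v)$ must preserve this equality, which pins down $\inv_v\cuQ(P'_v) = \inv_v\cuQ(P_v)$.

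Then I would argue by contradiction. Suppose two of $\cuA, \cuB, \cuC$ obstruct the Hasse principle. By the observation, the local invariants of both obstructing classes are constant at every place. Because the explicit quaternion representatives from \eqref{Br_subfam} multiply to a trivial quaternion algebra, we have the exact identity $\cuA + \cuB + \cuC = 0$ in $\Br X$, and consequently $\inv_v\cuA + \inv_v\cuB + \inv_v\cuC = 0$ pointwise on $X(\Q_v)$ for every $v$. The local invariant of the remaining class is therefore automatically constant as well, so all three of $\inv_v\cuA$, $\inv_v\cuB$, $\inv_v\cuC$ are constant on $X(\Q_v)$ for every place $v$.

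To reach a contradiction I invoke Theorem~\ref{thm:failureWA}: its proof exhibits, for any $X \in \mathcal X$, an explicit place $v$ and a non-trivial class $\cuQ \in \{\cuA, \cuB, \cuC\}$ whose local invariant $\inv_v\cuQ$ takes both values on $X(\Q_v)$, and it is precisely this non-constancy that produces the Brauer--Manin obstruction to weak approximation. Such a non-constant local invariant directly contradicts the conclusion of the previous paragraph, so at most one of $\cuA, \cuB, \cuC$ can give an obstruction to the Hasse principle.

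I expect the hard part to live entirely inside Theorem~\ref{thm:failureWA}: one must explicitly produce two $\Q_v$-points of $X$ at some place $v$ with $p \notin \Q_v^{\times,2}$ (so $v = p$, $v = 2$, or an odd prime $q$ with $\left(\tfrac{p}{q}\right) = -1$) on which one of the representing rational functions $u/(Au + Bv)$, $(z-y)/u$, or $(Au + Bv)/(z - y)$ lies in different classes of $\Q_v^\times/\Q_v^{\times,2}$. This is a concrete local Hilbert-symbol calculation, simplified in our setting by the fact that $Muv$ and $AC[(Au+Bv)(Cu+Dv)-Muv]$ are norms from $\Q(\sqrt p)$, which gives several equivalent representatives for $\cuA$ and $\cuB$ to evaluate at degenerate points. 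Once that local computation is in hand, Theorem~\ref{thm:curlyABorC} needs nothing beyond the soft observation above.
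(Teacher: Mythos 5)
Your proposal is correct and matches the paper's intended argument: the paper derives Theorem~\ref{thm:curlyABorC} directly from Proposition~\ref{prop:surjectiveinvariantmap} (non-constancy of some $\inv_p$), combined with exactly your two soft observations that an obstructing class has constant local invariants everywhere and that $\cuA+\cuB+\cuC=0$. The only cosmetic difference is that the paper isolates the local surjectivity statement as a standalone proposition at the place $v=p$ rather than extracting it from the proof of Theorem~\ref{thm:failureWA}, and the adelic point you need comes for free from the definition of an obstruction to the Hasse principle rather than from Proposition~\ref{pro:loc_sol_subfam}.
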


Theorem~\ref{thm:failureWA} and Theorem~\ref{thm:curlyABorC} are direct consequences of the following proposition.

\begin{pro}\label{prop:surjectiveinvariantmap}
Consider an $X\in\mathcal{X}$. Assume $X(\Q_p)\neq\emptyset$, then the invariant map at $p$ of $\cuA$, $\cuB$ or $\cuC$ is surjective.
\end{pro}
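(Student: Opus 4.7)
The plan is to argue by contradiction: assume $\inv_p \cuA$, $\inv_p \cuB$, and $\inv_p \cuC$ are all constant on $X(\Q_p)$, and derive a contradiction. Since we are working with Brauer classes of order $2$, Definition~\ref{defi:ABC} gives $\cuC = \cuA \cdot \cuB$ in $\Br X/\Br \Q$; so it suffices to contradict the constancy of $\cuA$ and $\cuB$.

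The main tool is the pair of involutions $\tau \colon y \mapsto -y$ and $\sigma \colon z \mapsto -z$ of $X$. Both fix the projection $\pi \colon X \to \mathbb{P}^1$, so $\cuA$ is unchanged, whereas $\cuB$ transforms: using the identity $(z-y)(z+y) = z^2 - y^2 = (Au+Bv)(Cu+Dv) - Muv =: Q_A(u,v)$, one finds
\[
\inv_p \cuB(\tau P) - \inv_p \cuB(P) = (p, Q_A)_p, \qquad \inv_p \cuB(\tau\sigma P) - \inv_p \cuB(P) = (p, -1)_p.
\]
The identity $4AC \cdot Q_A(u,v) = (2ACu + (AD+BC-M)v)^2 - pN^2 v^2$ exhibits $4AC \cdot Q_A$ as a norm from $\Q_p(\sqrt{p})$, so $(p, Q_A)_p = (p, AC)_p$ does not depend on $(u,v)$. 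Moreover, since $pN^2$ has odd $p$-adic valuation, $Q_A$ has no non-trivial $\Q_p$-zero, and consequently $(y, z) \neq (0, 0)$ for every $P \in X(\Q_p)$ (else $Q_A(\pi(P)) = 0$).

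If $p \equiv 3 \pmod 4$, then $(p, -1)_p = -1$, and the involution $\tau\sigma$ produces a distinct $\Q_p$-point at which $\inv_p \cuB$ differs, contradicting constancy. So we may assume $p \equiv 1 \pmod 4$. Applying $\tau$ if $y \neq 0$ (else $\sigma$, which applies since $z \neq 0$) to a $\Q_p$-point then forces $(p, AC)_p = 1$, i.e.\ $AC$ is a $p$-adic norm.

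The final step, and the main obstacle, is to contradict the joint constancy of $\cuA$ and $\cuB$ under the remaining constraints. For this I would apply the symmetry from the Remark after Definition~\ref{defi:ABC} to realise $X$ as a second instance of \eqref{sys} with new coefficients $(\tilde A, \tilde B, \tilde C, \tilde D, \tilde M)$, in which the roles of $\cuB$ and $\cuC$ are exchanged while $\cuA$ is preserved. Repeating the previous analysis in the new representation yields the additional norm condition $(p, \tilde A \tilde C)_p = 1$, which unpacks to $(p, -MCD)_p = 1$. To then contradict constancy of $\cuA$, I would exhibit two $\Q_p$-points of $X$ at which $Mv/(Au+Bv)$ falls in different square classes modulo $p$; existence of such points reduces via Hensel's lemma to showing that the $(u:v)$-residues can be varied keeping $\overline{Muv}$ and $\overline{(Au+Bv)(Cu+Dv)}$ nonzero squares in $\F_p$ while flipping the square class of $\overline{Mv/(Au+Bv)}$. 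I expect the most intricate part of the argument to be this final case analysis on the $p$-adic valuations of $A, B, C, D, M$ needed to guarantee the required residue configurations.
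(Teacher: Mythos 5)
Your preliminary reductions are correct and essentially match the paper's: the sign involutions on $(y,z)$ together with the identity $4AC\bigl((Au+Bv)(Cu+Dv)-Muv\bigr)=(2ACu+(AD+BC-M)v)^2-pN^2v^2$ dispose of the cases $p\equiv 3\bmod 4$ and $(p,AC)_p=-1$, exactly as in the paper's proof. The genuine gap is in the final step, which you defer and whose stated strategy would fail. You propose to finish by contradicting the constancy of $\inv_p\cuA$, i.e.\ by exhibiting two $\Q_p$-points at which $Mv/(Au+Bv)$ lies in different square classes. But there are surfaces in the remaining case (satisfying $p\equiv 1\bmod 4$, $(p,AC)_p=(p,BD)_p=1$ and also your extra condition $(p,-MCD)_p=1$) for which $\inv_p\cuA$ genuinely \emph{is} constant: the surfaces $Y_{p,a,b}=X_{p,a,-p,1,-b,1,1}$ of Section~6 with $\left(\frac ap\right)=1$ are of this type, and Proposition~\ref{pro:ex1_inv_A} proves $\inv_p\cuA$ is constant on them, while Proposition~\ref{pro:ex1_inv_B} shows it is $\inv_p\cuB$ that is surjective. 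So no amount of cleverness will produce the two points you ask for; the contradiction must in general be extracted from $\cuB$ (or $\cuC$) by constructing explicit pairs of points, and your framework has already exhausted what constancy of $\cuB$ under sign changes yields.

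This is precisely why the paper's proof splits into eight cases according to $\v_p(M)$ and $m=\max\{\v_p(A),\v_p(B),\v_p(C),\v_p(D)\}$: when $m\geq 1$ or $\v_p(M)$ is positive in certain patterns (Cases 1, 2, and their reductions), surjectivity is proved for $\cuB$ by explicit point constructions; only when $m=0$ (Cases 3, 4, and their reductions) is it proved for $\cuA$, using the quadratic-residue counting of Lemmas~\ref{lem:quadres} and~\ref{lem:quadres_table}. None of this content — the case division, the explicit points, the residue lemmas, or the changes of variables reducing Cases 5--8 to Cases 1--4 — appears in your proposal, and you acknowledge that you have not carried it out. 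The part you have written is sound, but it covers only the easy reductions; the core of the proposition remains unproved, and the route you sketch for it does not work as stated.
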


We will need the following lemmas.

\begin{lem}\label{lem:quadres}
Let $p\equiv 1\mod 4$ be a prime. Consider the set $S(a,b)=\{a+by\mid y\in\F_p^{\times,2}\}$ for $a,b\in\F_p^\times$. Denote $S(a,b)_\zeta=\{x\in S(a,b)\mid \left( \frac xp\right)=\zeta\}$. 
\begin{enumerate}
\item[(a)] If $a,b\in\F_p^{\times,2}$, then $|S(a,b)_0|=1$, $|S(a,b)_1|=\dfrac{p-5}{4}$ and $|S(a,b)_{-1}|=\dfrac{p-1}{4}$.
\item[(b)] If $a\in\F_p^{\times,2}$, $b\notin\F_p^{2}$, then $|S(a,b)_0|=0$, $|S(a,b)_1|=\dfrac{p-1}{4}$ and $|S(a,b)_{-1}|=\dfrac{p-1}{4}$.
\end{enumerate}
In particular, for $a\in\F_p^{\times,2}$ we conclude that $S(a,b)$ contains a non-square.
\end{lem}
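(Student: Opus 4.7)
The plan is to rewrite $S(a,b)$ as the image of a character sum and then apply a standard evaluation. Since $\F_p^{\times,2}$ has $(p-1)/2$ elements and the affine map $y\mapsto a+by$ is injective (as $b\neq 0$), the set $S(a,b)$ has cardinality $(p-1)/2$. Hence determining two of the three numbers $|S(a,b)_0|, |S(a,b)_1|, |S(a,b)_{-1}|$ suffices.

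First I would handle $|S(a,b)_0|$ directly: $0\in S(a,b)$ exactly when $-a/b\in\F_p^{\times,2}$. Using the hypothesis $p\equiv 1\pmod 4$, which gives $\bigl(\tfrac{-1}{p}\bigr)=1$, one sees that $-a/b$ is a square iff $a/b$ is a square, so $|S(a,b)_0|=1$ in case (a) and $|S(a,b)_0|=0$ in case (b).

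Next, to split the remaining elements into squares and non-squares, I would compute $|S(a,b)_1|-|S(a,b)_{-1}|=\sum_{y\in\F_p^{\times,2}}\bigl(\tfrac{a+by}{p}\bigr)$, where the Legendre symbol is set to $0$ at $0$. Parametrising $y=z^2$ with $z\in\F_p^\times$ hits every nonzero square twice, so
\[
2\bigl(|S(a,b)_1|-|S(a,b)_{-1}|\bigr)=\sum_{z\in\F_p^\times}\Bigl(\tfrac{bz^2+a}{p}\Bigr)=\Bigl(\sum_{z\in\F_p}\Bigl(\tfrac{bz^2+a}{p}\Bigr)\Bigr)-\Bigl(\tfrac{a}{p}\Bigr).
\]
The standard evaluation of $\sum_{z\in\F_p}\bigl(\tfrac{\alpha z^2+\beta z+\gamma}{p}\bigr)$ for nonzero discriminant $\beta^2-4\alpha\gamma$ yields $-\bigl(\tfrac{\alpha}{p}\bigr)$; applied with discriminant $-4ab\neq 0$ this gives $-\bigl(\tfrac{b}{p}\bigr)$, so
\[
|S(a,b)_1|-|S(a,b)_{-1}|=-\tfrac12\Bigl(\bigl(\tfrac{a}{p}\bigr)+\bigl(\tfrac{b}{p}\bigr)\Bigr).
\]
Combining this difference with $|S(a,b)_1|+|S(a,b)_{-1}|=(p-1)/2-|S(a,b)_0|$ yields the stated counts in both cases: the difference is $-1$ in (a) giving $(p-5)/4$ and $(p-1)/4$, while the difference is $0$ in (b) giving $(p-1)/4$ and $(p-1)/4$.

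The final statement that $S(a,b)$ contains a non-square when $a\in\F_p^{\times,2}$ follows at once since in both computed cases $|S(a,b)_{-1}|=(p-1)/4>0$ (we may harmlessly assume $p\geq 5$, as $p=3$ is excluded by $p\equiv 1\pmod 4$). There is no real obstacle here beyond correctly quoting the character sum formula; the only small care is tracking the $|S(a,b)_0|$ correction when passing between $\F_p$ and $\F_p^\times$, and using $p\equiv 1\pmod 4$ exactly where it is needed to control $\bigl(\tfrac{-1}{p}\bigr)$.
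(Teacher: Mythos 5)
Your proof is correct and rests on exactly the same ingredient as the paper's: the character-sum identity $\sum_{z\in\F_p}\left(\frac{a+bz^2}{p}\right)=-\left(\frac{b}{p}\right)$ for $a,b\in\F_p^\times$, which the paper simply cites while you derive the counts from it in detail. The bookkeeping (injectivity of $y\mapsto a+by$, the $|S(a,b)_0|$ correction via $\left(\frac{-1}{p}\right)=1$, and the double cover $y=z^2$) is all accurate.
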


\begin{proof}
Both parts follow from the identity $\sum_{x\in\F_p} (\frac{a+bx^2}{p})=-(\frac{b}{p})$ where $a,b\in\F_p^\times$.
\end{proof}

\begin{lem}\label{lem:quadres_table}
Let $p\equiv 1\mod 4$ be a prime and fix $a,b,c,d\in\F_p^{\times,2}$. Then there exists an $y_0\in\F_p^\times$ such that either
\begin{enumerate}
\item[(i)] $a+by_0^2,\ c+dy_0^2\in\F_p^{\times}\setminus\F_p^{\times,2}$, or
\item[(ii)] $a+by_0^2\in \F_p^{\times}\setminus\F_p^{\times,2}$ and $c+dy_0^2=0$.
\end{enumerate}
\end{lem}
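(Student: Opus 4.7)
The plan is to substitute $y=y_0^2$, which reduces the statement to finding a $y\in\mathbb{F}_p^{\times,2}$ such that either both $a+by$ and $c+dy$ are non-squares, or $a+by$ is a non-square and $c+dy=0$; any square root of such a $y$ lies in $\mathbb{F}_p^\times$ and meets the requirements of the lemma.

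To find such a $y$, I would introduce three subsets of $\mathbb{F}_p^{\times,2}$:
\[
U=\left\{y\in\mathbb{F}_p^{\times,2} : \left(\tfrac{a+by}{p}\right)=-1\right\}, \quad V=\left\{y\in\mathbb{F}_p^{\times,2} : \left(\tfrac{c+dy}{p}\right)=-1\right\}, \quad W=\{y\in\mathbb{F}_p^{\times,2} : c+dy=0\}.
\]
An element of $U\cap V$ realises case (i), whereas an element of $U\cap W$ realises case (ii), so it suffices to prove $U\cap(V\cup W)\ne\emptyset$.

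Applying Lemma~\ref{lem:quadres}(a) to the pairs $(a,b)$ and $(c,d)$ yields $|U|=|V|=(p-1)/4$ and $|W|=1$. The sets $V$ and $W$ are disjoint by definition, so $|V\cup W|=(p-1)/4+1$. Combined with the trivial bound $|U\cup V\cup W|\le|\mathbb{F}_p^{\times,2}|=(p-1)/2$, inclusion–exclusion gives
\[
|U\cap(V\cup W)|\ge |U|+|V\cup W|-(p-1)/2=1,
\]
from which the lemma follows.

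The only mildly subtle point — and the reason case (ii) appears in the statement at all — is that $|U|+|V|=(p-1)/2$ matches $|\mathbb{F}_p^{\times,2}|$ exactly, so without the auxiliary set $W$ the inclusion–exclusion bound would only yield $|U\cap V|\ge 0$. Including the single zero $y=-c/d$ (which is in $\mathbb{F}_p^{\times,2}$ precisely because $p\equiv 1\pmod 4$ and $c,d$ are squares) is exactly what is needed to push the count strictly above zero.
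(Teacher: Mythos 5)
Your proof is correct and follows the same route as the paper, which simply invokes Lemma~\ref{lem:quadres}(a) for $S(a,b)$ and $S(c,d)$ and leaves the counting implicit; you have filled in exactly that counting argument, including the observation that the single zero of $S(c,d)$ is what pushes the intersection bound from $0$ to $1$.
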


\begin{proof}
Apply the statement (a) of Lemma~\ref{lem:quadres} for $S(a,b)$ and $S(c,d)$.
\end{proof}

\begin{proof}[Proof of Proposition~\ref{prop:surjectiveinvariantmap}]
We will first make a few reductions.

First suppose that $p \mid M, A, C$. Then the system \eqref{sys} is equivalent to the one with $M$, $A$ and $B$ divided by $p$. Also if $p^2 \mid M, A, C$ then we obtain an equivalent system by dividing these three coefficients by $p^2$. This shows that we can simply to the cases where
\begin{equation}\label{gcd_cond_1}
 p \nmid \gcd(A,C,M) \text{ and } p \nmid \gcd(B,D,M),
\end{equation}
and
\begin{equation}\label{gcd_cond_2}
p^2\nmid\gcd(A,B,M) \text{ and } p^2\nmid \gcd(C,D,M).
\end{equation}

Consider the case $p \equiv 3 \mod 4$. For a point $P=(u\colon v \colon x \colon y \colon z) \in X(\mathbb Q_p)$ we compute the invariant map at $P$ and $P'=(u\colon v \colon x \colon -y \colon -z)$. We see that
\[
\inv_p \cuB(P) \ne \inv_p \cuB(P')
\]
and this proves the proposition in this case.

Now we can assume 
\begin{equation}\label{p_1_mod4}
p\equiv 1\mod 4.
\end{equation}
Condition (C1) implies $(p,ABCD)_p=1$. Consider the case $(p,AC)_p=(p,BD)_p=-1$. For a point $P=(u\colon v\colon x\colon y\colon z)\in X(\Q_p)$, we define $P'=(u\colon v\colon x\colon -y\colon z)\in X(\Q_p)$. One can see that $\inv_p\cuB(P)\neq\inv_p\cuB(P')$ using \eqref{Br_gr_rewr}, and this proves the proposition in this case.

So now we can assume
\begin{equation}\label{hilbert_cond}
(p,AC)_p=(p,BD)_p=1.
\end{equation}

For equations satisfying (C1) and (C2), and the additional assumptions \eqref{gcd_cond_1} up to \eqref{hilbert_cond} we consider different cases depending on the valuation of $M$ and $m = \max\{v_p(A),v_p(B),v_p(C),v_p(D)\}$. Without loss of generality we can assume that $p^m \mmid A$ and $p^{m+1} \nmid B,C,D$.

In the first two cases we show that the invariant map of $\cuB$ at $p$ is surjective.
\begin{enumerate}
\item[\textbf{Case 1.}] $\v_p(M)=0$ and $m\geq 1$.
\item[\textbf{Case 2.}] $\v_p(M)=1$ and $m=1$.
\end{enumerate}
 In the next two cases we show that the invariant map of $\cuA$ at $p$ is surjective.
\begin{enumerate}
\item[\textbf{Case 3.}] $\v_p(M)=0$ and $m=0$.
\item[\textbf{Case 4.}] $\v_p(M)$ is odd and $m=0$.
\end{enumerate}
In the last four cases we can make a change of variables to reduce to one of the previous cases.
\begin{enumerate}
\item[\textbf{Case 5.}] $\v_p(M)=1$ and $m\geq 2$.
\item[\textbf{Case 6.}] $\v_p(M) \geq 3$ is odd and $m \geq 1$.
\item[\textbf{Case 7.}] $\v_p(M) \geq 2$ is even and $m=0$.
\item[\textbf{Case 8.}] $\v_p(M) \geq 2$ is even and $m\geq 1$.
\end{enumerate}

\noindent
\textbf{Surjectivity for $\cuB$}.
To prove $\inv_p\mathcal B$ is surjective we will find points $P_i=(u_i\colon v_i\colon x_i\colon y_i\colon z_i)\in X(\Q_p)$ such that 
$$
\left(\frac{u_1(z_1-y_1)}{p}\right)\left(\frac{u_2(z_2-y_2)}{p}\right)=-1,
$$ 
since this implies $\inv_p\cuB(P_1)+\inv_p\cuB(P_2)=\frac{1}{2}$.

\noindent\textbf{Case 1.} Since $p \mid A$, condition (C1) implies that 
$
BC\equiv M\not\equiv 0\mod p.
$
Hence $\v_p(B)=\v_p(C)=0$.

\textbf{Case 1a.} If $\v_p(D)=0$ then \eqref{p_1_mod4} and \eqref{hilbert_cond} imply $-BD\in\Z_p^{\times, 2}$.
Choose $r\in\Z_p^{\times}$ such that $r\sqrt{-BD}\notin\Z_p^{\times, 2}$. Consider $y_2=\frac{1}{2}\left(\frac{BD}{r}-r
\right)$ and note that
\begin{equation*}
-\frac{DM}{C}\equiv -BD\mod p
\text{\quad and\quad}
\left(\frac{A}{M}y_2^2+B\right)\left(\frac{C}{M}y_2^2+D\right)\equiv y_2^2 + BD\equiv \frac{1}{4}\left(r+\frac{BD}{r}\right)^2 \not\equiv 0\mod p.
\end{equation*}
Then take 
$$
P_1=\left(-\frac DC \colon 1 \colon 0\colon \sqrt{-\dfrac{DM}C}\colon 0\right) \; \text{ and } \; P_2=\left(\frac{y_2^2}{M}\colon 1\colon 0\colon y_2\colon \sqrt{\left(\frac{A}{M}y_2^2+B\right)\left(\frac{C}{M}y_2^2+D\right)}\right)\in X(\Q_p).
$$

\textbf{Case 1b.} If $\v_p(D)\geq 1$ then take $y_1\in\Z_p^{\times, 2}$ and $y_2\notin \Z_p^{\times, 2}$.
Then
\begin{equation}
\left(\frac{A}{M}y_i^2+B\right)\left(\frac{C}{M}y_i^2+D\right)\equiv \frac{BC}{M}y_i^2\equiv y_i^2\mod p,
\end{equation}
so we can choose $z_i\in\Z_p$ such that $z_i\equiv -y_i\mod p$ and $z_i^2=\left(\frac{A}{M}y_i^2+B\right)\left(\frac{C}{M}y_i^2+D\right)$.
Now take 
$$
P_i=\left(1\colon \frac{y_i^2}{M}\colon 0\colon y_i\colon z_i\right)\in X(\Q_p).
$$

\noindent\textbf{Case 2.} In this case we have $p \mmid M$ and $p \mmid A$ and $p^2 \nmid B,C,D$. From \eqref{gcd_cond_1} we get $\v_p(C)=0$, so condition (C1) implies $p \mid B$. From $p^2 \nmid B$, \eqref{gcd_cond_1} and (C1) we conclude in order that $\v_p(B)=1$, $\v_p(D)=0$ and $\v_p(N)\ge 1$.

Introduce $A=pA'$, $B=pB'$, $M=pM'$ and $N=pN'$. Then (C1) becomes $(A'D+B'C-M')^2-4A'B'CD=pN'^2$ and we conclude $\v_p(A'D+B'C-M)=0$. 
Our system is equivalent to
$$
\begin{cases}
-x^2+\dfrac{y^2}{p}=M'uv;\\
\dfrac{4A'C}{p}(z^2-y^2)=\bigl(2A'Cu+(B'C+A'D-M')v\bigr)^2-pN'^2v^2.
\end{cases}
$$
\indent\textbf{Case 2a.} Suppose that $M'\cdot\frac{B'C+A'D-M'}{2A'C}\in\Z_p^{\times 2}$. 
Choose $r_1\in\Z_p^{\times 2}$ and $r_2\not\in\Z_p^{\times 2}$ and define 
$$
y_i=\frac{1}{2}\left(\frac{A'C}{r_i}-r_i\right)N',\quad z_i=\frac{1}{2}\left(\frac{A'C}{r_i}+r_i\right)N'\quad \text{ and } \quad x_i=\sqrt{M'(B'C+A'D-M')\cdot 2A'C+py_i^2}.
$$ 
Now we can just take
$
P_i=(-(B'C+A'D-M')\colon 2A'C\colon x_i\colon py_i\colon pz_i)\in X(\Q_p).
$

\textbf{Case 2b.} Now suppose that $M'\cdot\frac{B'C+A'D-M'}{2A'C}\notin\Z_p^{\times 2}$. Let $(u:v:x:y:z)\in X(\Q_p)$ be represented by a primitive tuple in $\Z_p$. Obviously, $\v_p(y),\v_p(z)\ge 1$ so write $y=py_1$ and $z=pz_1$. Then 
$$
(2A'Cu+(B'C+A'D-M')v)^2-pN'^2v^2=4A'Cp(z_1^2-y_1^2),
$$ 
hence $u\equiv -\frac{B'C+A'D-M'}{2A'C} v\mod p$. This turns the first equation into $M'\cdot\frac{B'C+A'D-M'}{2A'C} 
v^2\equiv x^2\mod p$. It is possible only if $u\equiv v\equiv x\equiv 0\mod p$. This contradicts the assumption $X(\Q_p) \ne \emptyset$.
\bigskip

\noindent\textbf{Surjectivity for $\cuA$.}
In Cases 3 and 4 we apply the same technique for proving that $\inv_p \mathcal A$ is surjective.

\noindent\textbf{Case 3.} By assumption $p \nmid MABCD$. Since $(p,AC)_p=(p,BD)_p=1$, we have $AC,BD\in \Z_p^{\times, 2}$.

\textbf{Case 3a.} If $ABM\notin \Z_p^{\times, 2}$ then $\mathcal A$ has different invariants at the points
$$
P_1=(1\colon 0\colon 0\colon 0\colon \sqrt{AC})\in X(\Q_p)\quad \text{ and } \quad P_2=(0\colon 1\colon 0\colon 0\colon \sqrt{BD})\in X(\Q_p).
$$
as one can see using \eqref{Br_gr_rewr}.

\textbf{Case 3b.} Now let $ABM\in\Z_p^{\times, 2}$. It follows from Lemma \eqref{lem:quadres_table} that there exists a $y_0\in \Z_p$ such that $1+\frac{B}{MA}y_0^2$, $\frac{C}{A}+\frac{D}{MA}y_0^2\not\in\Z_p^{\times, 2}$, or $\frac{C}{A}+\frac{D}{MA}y_0^2 \equiv 0 \mod p$ and $1+\frac{B}{MA}y_0^2\not\in\Z_p^{\times, 2}$. In the first case we consider
$$
P_1=(1\colon 0\colon 0\colon 0\colon \sqrt{AC})\quad \text{ and } \quad
P_2=\left(1\colon \frac{y_0^2}{M}\colon 0\colon y_0\colon A\sqrt{\left(1+\frac{B}{MA}y_0^2\right)\left(\frac{C}{A}+\frac{D}{MA}y_0^2\right)}\right)
\in X(\Q_p).
$$
In the second case we choose a $y_1\in\Z_p$ such that $y_1\equiv y_0\mod p$ and $\frac{C}{A}+\frac{D}{MA}y_1^2=0$. We then work with
$$
P_1=(1\colon 0\colon 0\colon 0\colon \sqrt{AC})\quad \text{ and } \quad  P_2=\left(1\colon \frac{y_1^2}{M}\colon 0\colon y_1\colon 0\right)\in X(\Q_p).
$$

\noindent\textbf{Case 4.} Let $M=p^{2k+1}M'$ with $k\geq 0$ and $M' \in \mathbb Z_p^\times$. From (C1) we obtain that $AD\equiv BC\mod p$. Lemma~\ref{lem:quadres}(b) for $S\left(1,\frac{B}{M'A}\right)$ implies the existence of an $x_0\in\Z_p^\times$ such that $1-\frac{B}{M'A}x_0^2\in\Z_p^\times\setminus\Z_p^{\times 2}$. Then 
$$
1-\frac{D}{M'C}
x_0^2\equiv 1-\frac{B}{M'A}x_0^2\mod p,
$$ 
so we can take 
$$
P_1=(1:0:0:0:\sqrt{AC})\, \text{ and }\, P_2=\left(1:-\frac{x_0^2}{M'}:p^kx_0:0:\sqrt{AC\left(1-\frac{Bx_0^2}{M'A}\right)\left(1-\frac{Dx_0^2}{M'C}\right)+p^{2k+1}x_0^2}\right).
$$

\noindent\textbf{Case 5.} We have $p\mmid M$ and $p^2 \mid A$. Conditions (C1) and \eqref{gcd_cond_1} imply $p\nmid C$, $p \mid B$, $p \nmid D$ and $p \mid N$. Then \eqref{sys} is isomorphic to the system with coefficients $(p^{-2}A,p^{-1}B,C,pD,p^{-1}M,p^{-1}N)$ under the morphism $(u\colon v \colon x \colon y \colon z) \mapsto (pu\colon v \colon x\colon y \colon z)$. For the new system we have $\v_p(M)=0$ and $m\geq 1$, which was considered in Case 1.

\noindent\textbf{Case 6.} From $p^{2k+1} \mmid M$ and $p\mid A$ we get, as in the previous case, $p \mid B,N$ and $p \nmid C,D$. Considering (C1) modulo $p^{2k+1}$ we get $(AD-BC)^2 \equiv pN^2 \mod p^{2k+1}$. From this we conclude that $p^{k+1} \mid AD-BC$ and $p^k \mid N$. In particular, $p^2 \mid A$ if and only if $p^2 \mid B$. So we conclude $p\mmid A,B$ from \eqref{gcd_cond_2}. This implies $2\v_p(AD-BC+M) = \v_p(pN^2-4ABCD) = 2$ and hence $\v_p(AD+BC)=1$. From this we deduce that
\[
2\v_p(AD-BC)=\v_p(pN^2 +2(AD+BC)M-M^2) = 2k+2.
\]
Hence $p^{k+1} \mmid AD-BC$.

Our system of equations is equivalent to \eqref{sys} with the coefficients
\[
(p^{-2k}MD,-p^{-2k-1}MB,-C,p^{-1}A,p^{-2k-1}(AD-BC)^2)
\]
under the morphism $(u\colon v \colon x \colon y \colon z) \mapsto (\frac{Au+Bv}{AD-BC} \colon p\frac{Cu+Dv}{AD-BC} \colon p^{-k}x \colon p^{-k}y \colon p^{-k}z)$. This new system satisfied $\v_p(M)=1$ and $m=1$, which was discussed in Case 2.

\noindent\textbf{Case 7.} Now we know that $p^{2k} \mmid M$ and $p\nmid ABCD$. Condition (C1) implies $AD\equiv BC\mod p$, so $\v_p(AD+BC)=0$. From $\v_p(M)=2k$ and (C1) we obtain that $\v_p(AD-BC)=k$. Now consider the coefficients
\[
(p^{-2k}MD,-p^{-2k}MB,-C,A,p^{-2k}(AD-BC)^2)
\]
with variables $(\frac{Au+Bv}{AD-BC}\colon \frac{Cu+Dv}{AD-BC}\colon p^{-k}x\colon p^{-k}y\colon p^{-k}z)$ for the system \eqref{sys}. This is precisely Case 3.

\noindent\textbf{Case 8.} Now we assume $p^{2k} \mmid M$ and $m=1$. Without loss of generality $p \mmid A$ and as in the previous cases we can prove $p \mmid B$, $p\nmid CD$, $\v_p(AD+BC)=1$ and $\v_p(AD-BC)\ge k+1$. The system \eqref{sys} with coefficients $(p^{-2k}MD,-p^{-2k-1}MB,-C,p^{-1}A,p^{-2k-1}(AD-BC)^2)$ with variables $(\frac{Au+Bv}{AD-BC}\colon p\frac{Cu+Dv}{AD-BC}\colon p^{-k}x\colon p^{-k}y\colon p^{-k}z)$ is isomorphic to our system. This new system satisfies $\v_p(M)$ is odd and $m=0$, which is precisely Case~4.
\end{proof}

\section{Explicit examples}

All known examples of Brauer--Manin obstructions for quartic del Pezzo surfaces occur on surfaces with a Brauer group modulo constants of order $2$. The first such obstruction was described by Birch and Swinnerton-Dyer in \cite{BSD}. Work by Jahnel and Schindler \cite{JS} showed that the locus of quartic del Pezzo surfaces with a Brauer group of order $2$ for which the Brauer--Manin obstruction is the only one, is dense in the moduli space of all del Pezzo surfaces of degree $4$.

Mitankin and Salgado \cite{MS} restricted to the subfamily in which a positive proportion has a Brauer group of order $4$. Such surfaces can be written as
$$
\begin{cases}
x_0x_1-x_2x_3=0;\\
a_0x_0^2+a_1x_1^2+a_2x_2^2+a_3x_3^2+a_4x_4^2=0,
\end{cases}
$$
where $a_0a_1,a_2a_3,-a_0a_2\in\Q^{\times,2}$ and $-a_0a_4(a_0a_1-a_2a_3)\notin\Q^{\times,2}$. The Hasse principle, however, trivially holds for such surfaces, because of the rational point $(1:0:\sqrt{-a_0/a_2}:0:0)$.

In this section we will exhibit families of quartic del Pezzo surfaces with a Brauer group of order $4$ with a Brauer--Manin obstruction to the Hasse principle.

\subsection{Obstructions coming from $\mathcal A$}

Let us define our first subfamily. Consider a prime number $p\equiv 1\mod 4$ and integers $a,b$ such that $ab=p-1$. The surface $Y_{p,a,b}$ is given by the system
$$
\begin{cases}
y^2-px^2=uv;\\
z^2-px^2=(au-pv)(u-bv).
\end{cases}
$$
Obviously, $Y_{p,a,b}=X_{p,a,-p,1,-b,1,1}$, and one can check that conditions (C1) and (C2) are satisfied with $N=1$.

For such surfaces we have $\cuA=\left(p,\frac{u}{au-pv}\right)$, $\cuB=\left(p,\frac{z-y}{u}\right)$ and $\cuC=\left(p,\frac{au-pv}{z-y}\right)$.

\begin{pro}\label{pro:ex1_inv_B}
Consider the surface $Y=Y_{p,a,b}$.
\begin{enumerate}
\item[(a)] The variety $Y$ is everywhere locally soluble.
\item[(b)] The invariant map of $\inv_p\cuB \colon Y(\mathbb A_\Q) \to \frac12\Z/\Z$ is surjective.
\end{enumerate}
\end{pro}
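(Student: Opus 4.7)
The plan is to prove (a) place-by-place and then derive (b) from an involution combined with explicit corner points, the $v=2$ analysis in (a) being the main technical hurdle.

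For (a), since $Y_{p,a,b}$ belongs to $\mathcal{X}$, Proposition~\ref{pro:loc_sol_subfam} supplies $\Q_v$-points at every finite place $v \notin \{2,p\}$ with $p \notin \Q_v^{\times, 2}$. The real point $(1:0:1:\sqrt p:\sqrt{p+a})$ is defined over $\R$ because $|a| \leq p-1$ forces $p+a \geq 1$. At $v = p$ I will use the family $(1:y_0^2:0:y_0:z_0)$ with $z_0^2 = (a - py_0^2)(1 - by_0^2)$; using $ab = p-1$ the reduction is $z_0^2 \equiv a + y_0^2 \pmod p$, and a character sum in the spirit of Lemma~\ref{lem:quadres} produces $y_0 \in \F_p$ with $a + y_0^2$ a nonzero square, which lifts to $\Q_p$ by Hensel.

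The delicate place is $v=2$. If $p \equiv 1 \pmod 8$ then $p \in \Q_2^{\times, 2}$ and $(0:0:1:\sqrt p:\sqrt p)$ works. If $p \equiv 5 \pmod 8$ then $ab = p-1$ forces $\v_2(ab) = 2$, so $(\v_2(a), \v_2(b)) \in \{(0,2),(1,1),(2,0)\}$. In the middle case $(2:2:1:y_0:z_0)$ with $y_0^2 = p+4$ and $z_0^2 = p + 4(a-p)(1-b)$ works: both sides reduce to $1 \pmod 8$ precisely because $a$ and $b$ are each twice an odd integer. In the outer cases one of the corner points $(1:0:0:0:\sqrt a)$, $(b:1:0:\sqrt b:0)$, or $(p:a:0:\sqrt{pa}:0)$ is defined over $\Q_2$ after tracking the unit part of $a$ or $b/4$ modulo $8$. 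This routine but lengthy case analysis will be the main obstacle in (a).

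For (b), I will exploit the involution $\sigma : (u:v:x:y:z) \mapsto (u:v:x:y:-z)$ on $Y$. Subtracting the two defining equations and using $ab = p-1$ gives $z^2 - y^2 = au^2 - 2puv + pbv^2$, and combined with $(p,-1)_p = 1$ (from $p \equiv 1 \pmod 4$) this shows, for any $P \in Y(\Q_p)$ with $\v_p(u) = 0$,
\[
\inv_p \cuB(P) + \inv_p \cuB(\sigma P) \;=\; (p, -(z^2-y^2)/u^2)_p \;=\; (p, a)_p.
\]
If $a \notin \F_p^{\times, 2}$ the two invariants differ and (b) is immediate. Otherwise $a$ is a square modulo $p$, so $b = (p-1)/a$ is also a square (since $-1 \in \F_p^{\times, 2}$ for $p \equiv 1 \pmod 4$); the corner points $P_1 = (1:0:0:0:\sqrt a)$ and $P_3 = (b:1:0:\sqrt b:0)$ then lie in $Y(\Q_p)$ and satisfy
\[
\inv_p \cuB(P_1) + \inv_p \cuB(P_3) \;=\; (p, \sqrt{ab})_p \;=\; (p, \sqrt{-1})_p,
\]
which equals $-1$ exactly for $p \equiv 5 \pmod 8$. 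The residual case $p \equiv 1 \pmod 8$ with $a$ square is then handled using the family from (a): the substitution $t = z_0 - y_0$ identifies $(z-y)/u$ with any $t \in \F_p^\times \setminus \{\pm\sqrt{-a}\}$, and since $p \geq 17$ in this case, both square classes in $\F_p^\times$ contain valid values of $t$, giving $\inv_p \cuB$ surjective.
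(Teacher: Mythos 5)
Your part (b) is correct and is a genuinely self-contained alternative to the paper, which at this point simply invokes Case 1 of the proof of Proposition~\ref{prop:surjectiveinvariantmap}. The identity $(z-y)(z+y)\equiv au^2\pmod p$ combined with the involution $z\mapsto -z$, the two corner points when $a$ is a square, and the parametrisation by $t=z_0-y_0$ all check out, using that $\v_p(u)=0$ on any primitive $p$-adic point (as in the proof of Proposition~\ref{pro:ex1_inv_A}). In fact your last argument alone already suffices for every $p>5$: for any $a$, the value of $(z-y)/u$ ranges over $\F_p^\times$ minus at most the two square roots of $-a$, so it hits both square classes.

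The gap is in part (a) at $v=2$, in the outer cases $(\v_2(a),\v_2(b))\in\{(0,2),(2,0)\}$. The claim that one of $(1:0:0:0:\sqrt a)$, $(b:1:0:\sqrt b:0)$, $(p:a:0:\sqrt{pa}:0)$ is always defined over $\Q_2$ is false. Take $p=37$, $a=3$, $b=12$ (so $ab=36=p-1$ and $p\equiv 5\bmod 8$): then $a\equiv 3\bmod 8$, $b/4\equiv 3\bmod 8$ and $pa=111\equiv 7\bmod 8$, so none of $a$, $b$, $pa$ is a square in $\Q_2$; the remaining corner point $(0:1:0:0:\sqrt{pb})$ fails too since $pb/4=111$. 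The problem is structural: the only constraint is $a\cdot(b/4)=(p-1)/4$, which leaves the residues of $a$ and $b/4$ modulo $8$ free enough for all the $x=0$ corner points to fail simultaneously. The surface is nevertheless $2$-adically soluble --- e.g.\ $(1:27:1:8:z)$ with $z^2=321745\equiv 1\bmod 8$ lies on $Y_{37,3,12}(\Q_2)$ --- but one must use points with $x\neq 0$, i.e.\ exploit the classes represented by $y^2-px^2$ modulo $8$, so the announced case analysis needs to be redone along those lines. The rest of (a) (places $v\notin\{2,p\}$ via Proposition~\ref{pro:loc_sol_subfam}, the real place, the Hensel argument at $p$, and the middle case $\v_2(a)=\v_2(b)=1$) is fine and matches the paper's outline, which is itself terse about the place $2$.
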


\begin{proof}
Local solubility at all places $v\not\in \{2,p\}$ has already been proven in Proposition \ref{pro:loc_sol_subfam}. Local solubility at $p$ and the surjectivity of the invariant map of $\cuB$ at $p$ follow from Case 1 in the proof of Proposition~\ref{prop:surjectiveinvariantmap}. For local solubility at $2$ one can simply consider all possibilities of $a$, $b$ and $p$ modulo $8$.
\end{proof}

So we might expect an obstruction coming from $\cuA$.

\begin{pro}\label{pro:ex1_inv_A}
For all $(s_v)_v\in Y_{p,a,b}(\A_\Q)$ we have: $\inv_v \cuA(s_v)=0$ for $v\neq p$, and:
$$
\inv_p \cuA(s_p)=\begin{cases}
0 & \text{ if }\left(\dfrac{a}{p}\right)=1;\\
\dfrac{1}{2} & \text{ if }\left(\dfrac{a}{p}\right)=-1.
\end{cases}
$$
\end{pro}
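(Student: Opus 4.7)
The plan is to compute the Hilbert symbol $\inv_v \cuA(s_v) = (p, g)_v$ at each place $v$, where $g$ is a suitable representative of $\cuA$ in $\kappa(Y)$. At $v = \infty$, since $p > 0$, the algebra $(p,\cdot)_\infty$ splits over $\R$, so $\inv_\infty \cuA = 0$ automatically. For a finite place $v = q \neq p$, I would first dispose of the case $p \in \Q_q^{\times 2}$, in which $(p, \cdot)_q$ vanishes identically. Otherwise $\Q_q(\sqrt p)/\Q_q$ is the unramified quadratic extension (which covers $q=2$ when $p \equiv 5 \mod 8$, while $p \equiv 1 \mod 8$ puts us in the previous case), so $(p, \alpha)_q = (-1)^{\v_q(\alpha)}$. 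The defining equations $uv = y^2-px^2$ and $(au-pv)(u-bv) = z^2-px^2$ exhibit both products as norms from $\Q_q(\sqrt p)$, which yields three equally valid representatives
\[
\cuA = (p,\, u(au-pv)) = (p,\, v(au-pv)) = (p,\, u(u-bv)).
\]
For a primitive integral representative $(u_0\colon v_0 \colon x_0 \colon y_0 \colon z_0)$, I case-split on the $q$-adic divisibility of $u_0$ and $v_0$. Simultaneous divisibility $q \mid u_0, v_0$ forces $q \mid x_0, y_0, z_0$ as well (since $p$ is a non-square mod $q$), contradicting primitivity. The only delicate subcase is $q \nmid u_0, v_0$ with $q$ dividing both $au_0 - pv_0$ and $u_0 - bv_0$: subtracting $a$ times the second from the first gives $(ab-p)v_0 = -v_0$, using $ab = p-1$, forcing $q \mid v_0$, a contradiction. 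In every other subcase, the appropriate representative immediately has even valuation.

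For $v = p$, since $p \equiv 1 \mod 4$ makes $(p-1)/2$ even, the Hilbert symbol collapses to $(p, \alpha)_p = \left(\tfrac{\bar \alpha}{p}\right)$, where $\bar\alpha$ is the unit part. As above, a primitive integral representative cannot have both $u_0, v_0 \in p\Z_p$. If $p \nmid u_0$, I rewrite $u/(au-pv) = a^{-1} \cdot (1 - pv/(au))^{-1}$; since $1 - pv_0/(au_0) \in 1+p\Z_p$ is a square by Hensel, its Hilbert symbol is trivial and $\inv_p \cuA(P) = (p, 1/a)_p = \left(\tfrac a p\right)$. If instead $p \mid u_0$ (and hence $p \nmid v_0$), I switch to $\cuA = (p, u/(u-bv))$, valid because $(p, (au-pv)(u-bv)) = (p, z^2-px^2) = 0$; now $u_0 - bv_0 \equiv -bv_0 \mod p$ is a unit. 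Writing $u_0 = p^e u''$ with $u'' \in \Z_p^\times$, the invariant reduces to $\left(\tfrac{u'' \cdot (-bv_0)}{p}\right) = \left(\tfrac{b u'' v_0}{p}\right)$, using $\left(\tfrac{-1}{p}\right) = 1$. From $p^e u'' v_0 = y_0^2 - px_0^2$, a short analysis by parity of $e$ shows $u'' v_0 \equiv (\mathrm{unit})^2 \mod p$ if $e$ is even, and $u'' v_0 \equiv -(\mathrm{unit})^2 \mod p$ if $e$ is odd; in either case $\left(\tfrac{u''v_0}{p}\right) = 1$, again invoking that $-1$ is a square mod $p$. The invariant is therefore $\left(\tfrac{b}{p}\right)$, which equals $\left(\tfrac a p\right)$ since $ab \equiv -1 \mod p$.

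The main obstacle I anticipate is choosing the right representative of $\cuA$ when $p \mid u_0$: the naive representative $(p, u/(au-pv))$ becomes unwieldy, because the defining equations combined with $ab = p-1$ actually force $\v_p(au_0-pv_0) \geq 2$ in this regime, with delicate dependence on the point. Passing to $(p, u/(u-bv))$ sidesteps this entirely by ensuring the denominator is a unit modulo $p$, after which the norm relation $u_0 v_0 = y_0^2 - px_0^2$ straightforwardly pins down the invariant in terms of $\left(\tfrac a p\right)$.
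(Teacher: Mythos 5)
Your strategy at $v=p$ matches the paper's in its main case: the paper likewise reduces $\inv_p\cuA$ to $\left(\frac{u(au-pv)}{p}\right)=\left(\frac ap\right)$ after showing that $p\nmid u$ for any primitive representative. (In fact the paper shows your second case at $p$ is vacuous: $p\mid u$ forces $p\mid y,z$, and then, using $ab=p-1$, also $p\mid v$ and $p\mid x$, contradicting primitivity; your computation there is harmless but unnecessary.) For $v\neq p$ the paper does not compute directly but invokes the good-reduction and efficient-evaluation results of Bright and Colliot-Th\'el\`ene--Skorobogatov to get constancy of $\inv_v$ and then evaluates at a single point; your explicit norm-form computation with the three representatives of $\cuA$ is a legitimate, more elementary alternative, and it goes through for all odd $q\neq p$.

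There is, however, a genuine gap at $q=2$ in the case $p\equiv 5\bmod 8$, where $\Q_2(\sqrt p)$ is unramified. Your claim that $2\mid u_0$ and $2\mid v_0$ forces $2\mid x_0,y_0,z_0$ is false: ``$p$ is a non-square mod $2$'' has no content, and the norm form $y^2-px^2$ with $x,y$ both odd has $2$-adic valuation exactly $2$, so primitive points with $\v_2(u_0)=\v_2(v_0)=1$ do exist. Concretely, $(2:-6:1:1:\sqrt{3129})\in Y_{13,2,6}(\Q_2)$ is such a point (note $3129\equiv 1\bmod 8$). The conclusion survives, but needs an argument: in this situation one checks that $x_0,y_0,z_0$ must all be odd, hence $\v_2(u_0v_0)=\v_2\bigl((au_0-pv_0)(u_0-bv_0)\bigr)=2$, which forces $a$ and $b$ to be even and $\v_2(au_0-pv_0)=1$; therefore $\v_2\bigl(u_0(au_0-pv_0)\bigr)=2$ is even and the invariant vanishes. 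You should add this computation, especially since $p\equiv 5\bmod 8$ with $a,b$ even is exactly the regime $\left(\frac ap\right)=-1$ in which the proposition is used to exhibit a failure of the Hasse principle, so the missing case cannot be dismissed as irrelevant.
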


\begin{proof}
One can prove this by direct computation. However, we will apply results from \cite{Beffeval} and \cite{CTS13} to conclude that $\inv_v$ is constant for $v \ne p$ and compute it at a single point in $X(\Q_v)$ to conclude $\inv_v$ is identically zero.

Finally, consider the case $v=p$. Let $s_p=(u\colon v\colon x\colon y\colon z)$ with $(u,v,x,y,z)$ a primitive tuple in $\Z^5_p$. Suppose $p \mid u$, then immediately $p \mid y,z$. Denote $u=pu'$,  then 
$$
u'v\equiv -x^2\equiv (au'-v)(pu'-bv)\equiv u'v+bv^2\mod p
$$
which means that $p \mid v$ and then $p \mid x$, which contradicts the primitivity of $(u,v,x,y,z)$. Thus, $
\v_p(u)=0$, so $\left(p,\frac{u}{au-pv}\right)_p=\left(\frac{u(au-pv)}{p}\right)=\left(\frac{a}{p}\right)$. This proves the proposition 
in the case $v=p$.
\end{proof}

We can now compute the Brauer--Manin obstruction coming from $\cuA$.

\begin{thm}\label{thm_ex1}
Consider the surface $Y_{p,a,b}$. Note that from $ab=p-1$ and $p\equiv 1\mod 4$ we conclude $(\frac{a}{p})=(\frac{b}{p})$. 
Also, $(\frac{a}{p})=(\frac{b}{p})=-1$ holds if and only if $p\equiv 5\mod 8$, and $a$ and $b$ are both even.
\begin{enumerate}
\item[(a)] If $(\frac{a}{p})=(\frac{b}{p})=-1$ then $Y_{p,a,b}(\Q)=\emptyset$ and this failure of the Hasse principle is 
explained by the Brauer--Manin obstruction coming from $\cuA$.
\item[(b)] Otherwise, we have
\[
Y_{p,a,b}(\mathbb A_\Q)^{\Br X} \ne \emptyset.
\]
\end{enumerate}
\end{thm}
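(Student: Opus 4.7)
The plan is to deduce the theorem directly from Proposition~\ref{pro:ex1_inv_B} and Proposition~\ref{pro:ex1_inv_A}, together with the description \eqref{Br_subfam} of $\Br Y/\Br \Q$. All the heavy lifting (local solubility, the explicit computation of $\inv_v\cuA$, and the surjectivity of $\inv_p\cuB$) has already been done, so the argument reduces to an assembly.

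For part (a), I would fix any $(s_v)_v \in Y(\A_\Q)$, which exists by Proposition~\ref{pro:ex1_inv_B}(a). When $\left(\tfrac{a}{p}\right)=-1$, Proposition~\ref{pro:ex1_inv_A} gives
\[
\sum_v \inv_v\cuA(s_v) = 0+\cdots+0+\tfrac12+0+\cdots = \tfrac12 \neq 0,
\]
so no adelic point lies in $Y(\A_\Q)^{\cuA}$. Since $Y(\Q) \subseteq Y(\A_\Q)^{\cuA}$ by global reciprocity, this forces $Y(\Q)=\emptyset$, and the failure of the Hasse principle is explained by $\cuA$.

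For part (b), assume $\left(\tfrac{a}{p}\right)=1$. The plan is to construct an adelic point pairing trivially with every class in $\Br Y$. Start from any $(s_v)_v \in Y(\A_\Q)$; by Proposition~\ref{pro:ex1_inv_A} every $\inv_v \cuA(s_v)$ is already zero, so the pairing with $\cuA$ vanishes termwise. Next compute $\beta := \sum_v \inv_v\cuB(s_v) \in \tfrac12\Z/\Z$. If $\beta = \tfrac12$, I would invoke the surjectivity of $\inv_p\cuB$ on $Y(\Q_p)$ from Proposition~\ref{pro:ex1_inv_B}(b) to replace $s_p$ by a point $s'_p \in Y(\Q_p)$ with the opposite $\cuB$-invariant. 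The crucial observation is that by Proposition~\ref{pro:ex1_inv_A} the value of $\inv_p\cuA$ depends only on $\left(\tfrac{a}{p}\right)$ and not on the particular local point, so this swap does not disturb the $\cuA$-condition. Either way, we obtain an adelic point with $\sum_v \inv_v\cuA = \sum_v \inv_v\cuB = 0$.

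Finally, \eqref{Br_subfam} expresses the non-trivial classes of $\Br Y/\Br_0 Y$ as $\cuA$, $\cuB$, and $\cuC = \cuA\cdot\cuB$. The global Brauer--Manin pairing $\cuQ\mapsto\sum_v\inv_v\cuQ(s_v)$ is additive in $\cuQ$ and vanishes identically on $\Br_0 Y$ by global reciprocity, so killing $\cuA$ and $\cuB$ automatically kills $\cuC$; hence the constructed adelic point lies in $Y(\A_\Q)^{\Br}$. The only delicate point in the argument is this compatibility at $p$ between the $\cuA$- and $\cuB$-adjustments, which holds precisely because Proposition~\ref{pro:ex1_inv_A} makes $\inv_p\cuA$ point-independent.
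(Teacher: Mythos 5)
Your argument is correct and follows essentially the same route as the paper: part (a) is the constancy of $\inv_v\cuA$ from Proposition~\ref{pro:ex1_inv_A}, and part (b) combines the surjectivity of $\inv_p\cuB$ with the point-independence of $\inv_p\cuA$ to adjust the component at $p$, after which $\cuC=\cuA+\cuB$ is handled by additivity of the pairing. The only piece you do not address is the pair of introductory claims in the statement (that $\left(\frac{a}{p}\right)=\left(\frac{b}{p}\right)$, and the characterisation of when both symbols equal $-1$ via $p\bmod 8$ and the parity of $a$ and $b$), which the paper settles by noting $\left(\frac{-1}{p}\right)=1$ and $\left(\frac{q}{p}\right)=1$ for odd primes $q\mid p-1$.
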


\begin{proof}
The first observation comes from the following facts: for $p\equiv 1\mod 4$ we have $(\frac{-1}{p})=1$ and $(\frac{q}{p})=1$ for odd prime divisors $q$ of $p-1$. Also, $(\frac{2}{p})=1$ if $p\equiv 1\mod 8$ and $(\frac{2}{p})=-1$ if $p\equiv 5\mod 8$.

If $(\frac{a}{p})=(\frac{b}{p})=-1$ then Proposition \ref{pro:ex1_inv_A} implies that $\sum_v\inv_v\cuA(s_v)=\frac{1}{2}$ for all $(s_v)_v\in Y_{p,a,b}(\A_\Q)$, so $Y_{p,a,b}(\A_\Q)^\cuA=\emptyset$.

If $(\frac{a}{p})=(\frac{b}{p})=1$ then Proposition \ref{pro:ex1_inv_A} and Proposition \ref{pro:ex1_inv_B} imply that 
$\sum_v\inv_v\cuA(s_v)=0$ for all ${(s_v)_v\in Y_{p,a,b}(\A_\Q)}$, and invariant maps of $\cuB$ and $\cuC$ at $p$ are 
surjective. Thus, $Y_{p,a,b}(\A_\Q)^{\Br}\neq\emptyset$.
\end{proof}

\begin{exa}
Consider a surface $Y_{13,2,6}$ given by
$$
\begin{cases}
y^2-13x^2=uv;\\
z^2-13x^2=(2u-13v)(u-6v)
\end{cases}
$$
Theorem \ref{thm_ex1} shows that $Y_{13,2,6}(\Q)=\emptyset$.
\end{exa}

\begin{exa}
Consider a surface $Y_{13,1,12}$ given by
$$
\begin{cases}
y^2-13x^2=uv;\\
z^2-13x^2=(u-13v)(u-12v)
\end{cases}
$$
Theorem \ref{thm_ex1} shows that there is no Brauer--Manin obstruction for $Y_{13,1,12}$ to the Hasse principle, and it has a trivial rational point $(1:0:0:0:1)\in Y_{13,1,12}(\Q)$.
\end{exa}

\begin{exa}
Consider a surface $Y_{13,12,1}$ given by
$$
\begin{cases}
y^2-13x^2=uv;\\
z^2-13x^2=(12u-13v)(u-v)
\end{cases}
$$
Theorem \ref{thm_ex1} shows that there is no Brauer--Manin obstruction for $Y_{13,12,1}$, and it has a rational point $(1:-3:2:7:16)\in Y_{13,12,1}(\Q)$ which is actually non-trivial to find.
\end{exa}

\begin{rema}
Note that we can change $a$ and $b$ up to squares without changing the isomorphism class of the surface. Unless $(a,b)$ equals either $(1,p-1)$ or $(-1,-(p-1))$, up to squares, there does not seem to be an obvious rational point. However, for each explicit surface we studied we were always able to find such a point.

The machinery from \cite{CTSSD} and \cite{WittenbergBook} seems ideally suited to proving such a statement by making $X$ into an elliptic fibration using the projection $f \colon X \dashrightarrow \mathbb P^2$ away from a plane studied in the beginning of this paper. Unfortunately the ``condition (D)'' in those works is not satisfied, which relates to the fact that the Brauer group of $X$ is not vertical.
\end{rema}

\begin{rema}
Also, note that the technique described above would not just establish the existence of a rational point, but even prove that these are Zariski dense. We already know these two statements to be equivalent by work of Salberger and Skorobogatov \cite{SalbergerSkorobogatov91}.
\end{rema}

\subsection{Obstructions coming from $\mathcal B$}

We will now consider a second subfamily for which $\mathcal B$ might give an obstruction.

\begin{defi}\label{def_ex_B}
Let $p$ be an odd prime, and $a$ and $b$ integers such that
\begin{enumerate}
\item $(a+b-1)^2-4ab=p$,
\item $4a\equiv 4b\equiv 1\mod p$,
\item $a\equiv b\equiv 1\mod 2$, and
\item $a\equiv 1\mod 8$.
\end{enumerate}
Let $S_{p,a,b}$ be the surface given by
\begin{equation}\label{sys_ex_B}
\begin{cases}
y^2-px^2=uv;\\
z^2-px^2=(u+v)(au+bv).
\end{cases}
\end{equation}
\end{defi}

Such triples $(p,a,b)$ do exist, as one can see from considering $p \equiv 5 \mod 8$, an integer $t \equiv 3\frac{p-1}{4}\mod 8$ and
\[
a_t=t^2p^2-tp-\frac{p-1}{4} \quad \text{ and } \quad  b_t=t^2p^2+tp-\frac{p-1}{4}.
\]

One can also check that $S_{p,a,b}=X_{p,1,1,a,b,1,1}$ satisfies conditions (C1) and (C2).

\begin{pro}
\
\begin{enumerate}
\item[(a)] All $S_{p,a,b}$ are everywhere locally soluble.
\item[(b)] For all $S_{p,a,b}$ the invariant map of $\cuA$ at $p$ is surjective.
\end{enumerate}
\end{pro}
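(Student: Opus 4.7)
The plan is to reduce both parts to results already established in the paper, by checking that the specific parameters $(A,B,C,D,M,N)=(1,1,a,b,1,1)$ place $S_{p,a,b}$ into the right case of the general framework.

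For part (a), I would first handle places $v\notin\{2,p\}$: with $N=1$, Proposition~\ref{pro:loc_sol_subfam} immediately covers the places at which $p\notin\Q_v^{\times,2}$, while the places at which $p$ is a square are taken care of by the explicit point $(0:0:1:\sqrt{p}:\sqrt{p})$ already recorded in Section~3. For the remaining places $v\in\{2,p\}$, the plan is to produce a single explicit point that works in both completions. Condition (2) of Definition~\ref{def_ex_B}, namely $4a\equiv 1\pmod p$, shows that $a\equiv (2^{-1})^2\pmod p$ is a quadratic residue, hence a square in $\Z_p^{\times}$ by Hensel's lemma; condition (4), $a\equiv 1\pmod 8$, likewise gives $a\in\Z_2^{\times,2}$. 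The point $(1:0:0:0:\sqrt{a})$ then satisfies the system \eqref{sys_ex_B} over both $\Q_p$ and $\Q_2$.

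For part (b), I would observe that the parameters satisfy $\v_p(M)=0$ and $m=\max\{\v_p(A),\v_p(B),\v_p(C),\v_p(D)\}=0$ (since $p\nmid a,b$ by condition (2)), so $S_{p,a,b}$ falls into Case~3 of the proof of Proposition~\ref{prop:surjectiveinvariantmap}. Since $ABM=1$ is a square in $\Z_p^{\times}$, Case~3a is excluded and we land in Case~3b. The hypotheses for that sub-case require $AC=a$ and $BD=b$ to be squares in $\Z_p^{\times}$, which holds by the computation of $(a/p)$ and $(b/p)$ from condition~(2). The construction in Case~3b invokes Lemma~\ref{lem:quadres_table} to produce an appropriate $y_0$ together with two explicit local points on which $\inv_p\cuA$ takes different values, giving the desired surjectivity.

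I do not expect a genuine obstacle here; the entire argument amounts to translating the hypotheses of Definition~\ref{def_ex_B} into the conditions needed to invoke Proposition~\ref{pro:loc_sol_subfam}, Case~3b of Proposition~\ref{prop:surjectiveinvariantmap}, and Lemma~\ref{lem:quadres_table}. The only mild subtlety is confirming that the parameters really do land in Case~3b and not one of the other seven cases, which is transparent because $p$ divides none of $A,B,C,D,M,N$.
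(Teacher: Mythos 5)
Your proposal is correct and follows essentially the same route as the paper: Proposition~\ref{pro:loc_sol_subfam} for the places away from $2$ and $p$ (together with the point $(0:0:1:\sqrt{p}:\sqrt{p})$ where $p$ is a square), the point $(1:0:0:0:\sqrt{a})$ at $v=2$ and $v=p$, and Case~3b of the proof of Proposition~\ref{prop:surjectiveinvariantmap} for the surjectivity of $\inv_p\cuA$. Your explicit verification that $a\equiv b\equiv(2^{-1})^2\pmod p$ places the surface in Case~3b (and that $p\equiv 5\pmod 8$, needed for that case, follows from conditions (1)--(3)) is exactly the bookkeeping the paper leaves implicit.
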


\begin{proof}
Proposition \ref{pro:loc_sol_subfam} addresses the local solubility at all places $v\not\in\{2,p\}$. Case 3 in the proof of Proposition~\ref{prop:surjectiveinvariantmap} establishes the local solubility at $p$ and the surjectivity of the invariant map of $\cuA$ at $p$. The local solubility at $2$ follows from the point $(1:0:0:0:\sqrt{a})\in X(\Q_2)$, since $a\equiv 1\mod 8$.
\end{proof}

We will now consider the obstruction coming from $\cuB$.

\begin{thm}\label{thm_ex_B}
We have $S_{p,a,b}(\mathbb A_\Q)^\cuB=\emptyset$ and in particular $S_{p,a,b}(\Q)=\emptyset$.
\end{thm}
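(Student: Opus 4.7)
The plan is to show $\sum_v \inv_v \cuB(s_v) = \tfrac12$ for every adelic point $(s_v)_v \in S_{p,a,b}(\A_\Q)$, which by reciprocity forces $S_{p,a,b}(\A_\Q)^{\cuB} = \emptyset$ and a fortiori $S_{p,a,b}(\Q) = \emptyset$. Specifically, I will establish $\inv_v \cuB \equiv 0$ at every $v \ne p$ and $\inv_p \cuB \equiv \tfrac12$ on all of $S(\Q_p)$.

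For $v \ne p$ I would mirror Proposition~\ref{pro:ex1_inv_A}, invoking \cite{Beffeval} and \cite{CTS13} to reduce the computation to evaluation at a single point of $S(\Q_v)$. At $v = \infty$ the Hilbert symbol $(p,\cdot)_\infty$ vanishes since $p > 0$. At $v = 2$, condition~(4) of Definition~\ref{def_ex_B} guarantees $\sqrt{a} \in \Z_2^\times$ with $\sqrt{a} \equiv 1 \pmod 4$, so the local point $(1:0:0:0:\sqrt{a}) \in S(\Q_2)$ gives $(p,\sqrt{a})_2 = 1$ by the explicit formula for the Hilbert symbol at $2$. At odd primes $v \ne p$ one picks a convenient $\Q_v$-point of good reduction where the argument of the symbol becomes a unit whose Legendre symbol is a square.

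The heart of the proof lies in the calculation at $p$. A short congruence check modulo $8$ using conditions (1), (3), (4) shows $p \equiv 5 \pmod 8$, so $\bigl(\tfrac 2p\bigr) = -1$ and $\pm\tfrac12$ are non-squares in $\F_p^\times$. Expanding $(z-y)(z+y) = z^2 - y^2$ via the defining equations and applying condition~(2) produces the key congruences
\[
(z-y)(z+y) = au^2 + (a+b-1)uv + bv^2 \equiv \tfrac14(u-v)^2 \pmod p, \quad y^2 \equiv uv \pmod p, \quad 4z^2 \equiv (u+v)^2 \pmod p.
\]
For a primitive representative $(u,v,x,y,z) \in \Z_p^5$ of a point in $S(\Q_p)$, an argument parallel to Case~4 of Proposition~\ref{prop:surjectiveinvariantmap} precludes $p \mid u$ and $p \mid v$ simultaneously. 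A case analysis then shows that in each situation the appropriate representative of $\cuB$ reduces modulo $p$ to an element of the non-square class $[\tfrac12]$: if $u,v$ are both units then $y^2 \equiv uv$ forces $v/u$ to be a square mod $p$ and $(z-y)/u \equiv \pm(1 \pm \sqrt{v/u})^2/2 \pmod p$; if $p \mid v$ with $u$ a unit then $(z-y)/u \equiv \pm\tfrac12$; if $p \mid u$ with $v$ a unit, the equivalent representative $(p,(z-y)/v)$ (which equals $\cuB$ in $\Br X$ because $uv = N_{\Q(\sqrt p)/\Q}(y+x\sqrt p)$ yields $(p,u/v)=0$) reduces to $\pm\tfrac12$. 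Whenever $z \equiv y \pmod p$ makes $(z-y)/u$ non-unital, one switches to $(p,a(z+y)/u)$, which by \eqref{Br_gr_rewr} equals $\cuB$ modulo the constant class $(p,a) \in \Br_0 X$. Since for $p \equiv 1 \pmod 4$ the Hilbert symbol $(p, p^\beta u)_p$ depends only on the Legendre symbol of the unit part of $u$, each case yields $\inv_p \cuB(s_p) = \tfrac12$.

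The hardest step is the $p$-adic case analysis: it must cover every valuation pattern in a primitive representative and switch between the three equivalent representatives of $\cuB$ to avoid zeros and poles of the argument. The structural reason the invariant is always $\tfrac12$ is that the binary form $au^2+(a+b-1)uv+bv^2$ has discriminant exactly $p$ by condition~(1), so $(z-y)(z+y)$ degenerates to a perfect square mod $p$ and pins both $(z-y)/u$ and $(z+y)/u$ into the non-square class $[\tfrac12] \in \F_p^\times/(\F_p^\times)^2$.
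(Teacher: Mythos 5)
Your proposal is correct and follows essentially the same route as the paper: trivial invariants away from $p$ via good reduction plus a single evaluation, and at $p$ the key congruence $z^2-y^2\equiv\tfrac14(u-v)^2$, $y^2\equiv uv$, together with $\bigl(\tfrac2p\bigr)=-1$ from $p\equiv 5\bmod 8$, forcing $\inv_p\cuB\equiv\tfrac12$. The only cosmetic difference is that the paper packages your sub-cases into the single identity $z\pm y\equiv\tfrac{\ep}{2}u\bigl(\tfrac yu\pm\ep\bigr)^2\bmod p$ (assuming $p\nmid u$, the case $p\mid u$ being symmetric), rather than splitting on the valuations of $u$, $v$ and $z\mp y$.
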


\begin{proof}
We can prove that $\inv_v$ is identically zero for $v \ne p$ as we did in the proof of Proposition~\ref{pro:ex1_inv_A}.

Finally, consider the case $v=p$. Let $s_p=(u:v:x:y:z)$ such that $(u,v,x,y,z)$ is primitive in $\Z_p$, hence in particular $\v_p(u)\v_p(v)=0$. Assume $\v_p(u)=0$; the other case is similar. The system \eqref{sys_ex_B} together with the condition $4a\equiv 4b\equiv 1\mod p$ gives us
$$
v\equiv \frac{y^2}{u}\mod p\quad \text{ and } \quad z^2\equiv \frac{1}{4}(u+v)^2\mod p.
$$
Let us write $z \equiv \frac{\ep}2(u+v) \mod p$ with $\ep = \pm 1$. Then we get
\begin{align*}
z\pm y \equiv \frac{\ep}2(u+v) \pm y  & \equiv \frac{\ep}2 \frac{y^2}u \pm y + \frac{\ep}2u\\
 & \equiv \frac{\ep}2u\left(\frac{y^2}{u^2} \pm 2 \ep \frac yu + 1 \right)\\
& \equiv \frac{\ep}2u\left(\frac yu \pm \ep\right)^2 \mod p.
\end{align*}

So at least one of $\eta = \frac u{z\pm y}$ is non-zero modulo $p$ at $s_p$, and we have $(p,\eta)=-1$ since $(\frac{2}{p})=-1$. We conclude $\inv_p\cuB(s_p)=\frac{1}{2}$.
\end{proof}

\begin{exa}
Consider $S_{13,153,179}$ given by
$$
\begin{cases}
y^2-13x^2=uv;\\
z^2-13x^2=(u+v)(153u+179v).
\end{cases}
$$
Theorem \ref{thm_ex_B} shows that $S_{13,153,179}(\Q)=\emptyset$ and hence this surface does not satisfy the Hasse principle.
\end{exa}

\end{document}